\newtheorem{theorem}{Theorem}[section]
\newtheorem{corollary}[theorem]{Corollary}
\newtheorem{lemma}[theorem]{Lemma}
\newtheorem{proposition}[theorem]{Proposition}
\newtheorem{conjecture}[theorem]{Conjecture}
{\theoremstyle{remark}
\newtheorem{remark}[theorem]{Remark}}
{\theoremstyle{definition}
\newtheorem{definition}[theorem]{Definition}

\newtheorem{example}[theorem]{Example}

}
\newcommand{\PP}[0]{\ensuremath{\mathbb{P}}}
\newcommand{\ZZ}[0]{\ensuremath{\mathbb{Z}}}
\newcommand{\GA}[0]{\ensuremath{\mathbb{G}_{\mathrm{a}}}}
\newcommand{\AF}[0]{\ensuremath{\mathbb{A}}}
\newcommand{\RR}[0]{\ensuremath{\mathbb{R}}}
\newcommand{\QQ}[0]{\ensuremath{\mathbb{Q}}}
\newcommand{\TT}[0]{\ensuremath{\mathbb{T}}}
\newcommand{\KK}[0]{\ensuremath{\mathbf{k}}}
\newcommand{\OO}[0]{\ensuremath{\mathcal{O}}}
\newcommand{\DD}[0]{\ensuremath{\mathfrak{D}}}
\newcommand{\tA}[0]{\ensuremath{\widetilde{A}}}
\newcommand{\tX}[0]{{\ensuremath{\widetilde{X}}}}
\newcommand{\fract}[0]{\ensuremath{\operatorname{Frac}}}
\newcommand{\LND}[0]{\ensuremath{\operatorname{LND}}}
\newcommand{\spec}[0]{\ensuremath{\operatorname{Spec}}}
\DeclareMathOperator{\SPEC}{\mathbf{Spec}}
\newcommand{\supp}[0]{\ensuremath{\operatorname{Supp}}}
\newcommand{\ML}[0]{\ensuremath{\operatorname{ML}}}
\newcommand{\FML}[0]{\ensuremath{\operatorname{FML}}}
\newcommand{\divi}[0]{\ensuremath{\operatorname{div}}}
\newcommand{\ord}[0]{\ensuremath{\operatorname{ord}}}
\newcommand{\pol}[0]{\ensuremath{\operatorname{Pol}}}
\newcommand{\rank}[0]{\ensuremath{\operatorname{rank}}}
\newcommand{\homo}[0]{\ensuremath{\operatorname{Hom}}}
\newcommand{\relint}[0]{\ensuremath{\operatorname{rel.int}}}
\newcommand{\trdeg}[0]{\ensuremath{\operatorname{tr.deg}}}
\newcommand{\Sym}[0]{\ensuremath{\operatorname{Sym}}}
\newcommand{\Div}[0]{\ensuremath{\operatorname{Div}}}
\begin{document}

\title[$\GA$-actions of fiber type on affine $\TT$-varieties]{$\GA$-actions of fiber type on affine $\TT$-varieties}

\author{Alvaro Liendo}
\address{Universit\'e
Grenoble I, Institut Fourier, UMR 5582 CNRS-UJF, BP 74, 38402
St.\ Martin d'H\`eres c\'edex, France}
\email{alvaro.liendo@ujf-grenoble.fr}

\date{\today}

\thanks{
\mbox{\hspace{11pt}}{\it 2000 Mathematics Subject
Classification}:
14R05, 14R20, 14J50.\\
\mbox{\hspace{11pt}}{\it Key words}: Torus action, $\GA$-action, locally nilpotent derivations,
affine varieties, Makar-Limanov invariant}

\begin{abstract}
Let $X$ be a normal affine $\TT$-variety, where $\TT$ stands for the algebraic torus. We classify $\GA$-actions on $X$ arising from homogeneous locally nilpotent derivations of fiber type. We deduce that any variety with trivial Makar-Limanov (ML) invariant is birationally decomposable as $Y\times\PP^2$, for some $Y$. Conversely, given a variety $Y$, there exists an affine variety $X$ with trivial ML invariant birational to $Y\times\PP^2$. 

Finally, we introduce a new version of the ML invariant, called the FML invariant. According to our conjecture, the triviality of the FML invariant implies rationality. This conjecture holds in dimension at most 3.
\end{abstract}

\maketitle

%\setcounter{tocdepth}{3}
%\tableofcontents

\section*{Introduction}

The main result of the paper consists in a birational characterization of normal affine algebraic varieties with trivial Makar-Limanov invariant. Let us introduce the necessary notation and definitions.

We let $\KK$ be an algebraically closed field of characteristic 0, $M$ be a lattice of rank $n$, and $\TT=\spec\KK[M]\simeq(\KK^*)^n$ be the algebraic torus over $\KK$. A $\TT$-variety $X$ is a variety endowed with an algebraic action of $\TT$. For an affine variety $X=\spec A$, introducing a $\TT$-action on $X$ is the same as to equipe $A$ with an $M$-grading. There are well known combinatorial descriptions of normal $\TT$-varieties. We send the reader to \cite{Dem70} and \cite[Ch. 1]{KKMS73} for the case of toric varieties, to \cite[Ch. 2 and 4]{KKMS73} and \cite{Tim08} for the complexity 1 case i.e., $\dim X=\dim \TT+1$, and to \cite{AlHa06,AHS08} for the general case.

We let $N=\homo(M,\ZZ)$, and $N_{\QQ}=N\otimes\QQ$. Any affine toric variety can be described via a polyhedral cone $\sigma\subseteq N_{\QQ}$. Similarly, the description of normal affine $\TT$-varieties due to Altmann and Hausen \cite{AlHa06} deals with a polyhedral cone $\sigma\subseteq N_{\QQ}$, a normal variety $Y$, and a divisor $\DD$ on $Y$ whose coefficients are polyhedra in $N_{\QQ}$ invariant by translation in $\sigma$.

To introduce a $\GA$-action on an affine variety $X$ is equivalent to fix a locally nilpotent derivation (LND) on its structure ring $A$ \cite[\S 1.5]{Fre06}. Any LND on $A$ can be extended to a derivation on $K=\fract A$ by the Leibniz rule. If an LND of $A$ is homogeneous with respect to the $M$-grading on $A$ we say that the associated $\GA$-action on $X$ is \emph{compatible} with the $\TT$-action. Furthermore, we say that a homogeneous LND $\partial$ (or, equivalently, the associated $\GA$-action) is of \emph{fiber type} if $\partial(K^\TT)=0$ and of \emph{horizontal type} otherwise \cite{FlZa05b,Lie08}.

In \cite{FlZa05b} Flenner and Zaidenberg gave a classification of compatible $\GA$-actions on normal affine $\KK^*$-surfaces. Generalizing this construction, in \cite{Lie08} a classification of $\GA$-actions on normal affine $\TT$-varieties of complexity 1 was given. In Theorem \ref{fiber} below, we extend this classification to $\GA$-actions of fiber type on normal affine $\TT$-varieties of arbitrary complexity.

The Makar-Limanov (ML) invariant \cite{KaMa97} showed to be an important tool for affine geometry. In particular, it allows to distinguish certain varieties from the affine space. For an algebra $A$, this invariant is defined as the intersection of the kernels of all locally nilpotent derivations on $A$. Nevertheless, this invariant is far form being optimal. Indeed, many non-rational varieties share the same ML invariant as that of the affine space \cite{Lie08}. The latter invariant is trivial i.e., $\ML(\AF^n)=\KK$. In Theorem \ref{birML} we give a birational characterization of normal affine varieties with trivial ML invariant.

To avoid such a pathology, we introduce the FML invariant which is defined as as the intersection of the fields of fractions of the kernel of all locally nilpotent derivations on $A$. For an affine variety $X$, we conjecture that $\FML(X)=\KK$ implies that $X$ is rational. In Theorem \ref{FML3} we prove this conjecture in dimension at most 3.

The content of the paper is as follows. In Section 1 we recall some generalities on $\TT$-actions and $\GA$-actions. In Section 2 we obtain our classification of LNDs of fiber type. In Section 3 we introduce the homogeneous ML invariant and show some of its limitations. In Section 4 we establish our principal result concerning the birational characterization. Finally, in Section 5 we introduce and study the FML invariant.

In the entire paper $\KK$ is an algebraically closed field of characteristic 0, except in Section 1.3, where $\KK$ can be non algebraically closed.

The author is grateful to Mikhail Zaidenberg for posing the problem and permanent encouragement, and to Pierre-Marie Poloni for useful discussions.

\section{Preliminaries}
\label{pre}

\subsection{Combinatorial description of $\TT$-varieties}
\label{comb-des}

Let $N$ be a lattice of rank $n$ and $M=\homo(N,\ZZ)$ be its dual lattice. We also let $N_{\QQ}=N\otimes\QQ$, $M_{\QQ}=M\otimes\QQ$, and we consider the natural duality pairing $M_{\QQ}\times N_{\QQ}\rightarrow \QQ$, $(m,p)\mapsto \langle m,p\rangle$. 

Let $\TT=\spec\KK[M]$ be the $n$-dimensional algebraic torus associated to $M$ and
let $X=\spec\,A$ be an affine $\TT$-variety. It is well known that the comorphism $A\rightarrow A\otimes \KK[M]$ induces an $M$-grading on $A$ and, conversely, every $M$-grading on $A$ arises in this way. Furthermore, a $\TT$-action is effective if an only if the corresponding $M$-grading is effective.

In \cite{AlHa06}, a combinatorial description of affine $\TT$-varieties is given. In what follows we recall the main features of this description. Let $\sigma$ be a pointed polyhedral cone in $N_{\QQ}$. We define $\pol_{\sigma}(N_{\QQ})$ to be the set of all polyhedra in $N_{\QQ}$ which can be decomposed as the Minkowski sum of a compact polyhedron and $\sigma$. 

To any polyhedron $\Delta\in\pol_{\sigma}(N_{\QQ})$ we associate its support function $h_{\Delta}:\sigma^\vee\rightarrow \QQ$ defined by $h_{\Delta}(m)=\min\langle m,\Delta\rangle$. Clearly the support function $h_{\Delta}$ is  piecewise linear. Furthermore, $h_{\Delta}$ is concave and positively homogeneous, i.e. 
$$h_{\Delta}(m+m')\geq h_{\Delta}(m)+h_{\Delta}(m'),\ \mbox{and}\ h_{\Delta}(\lambda m)=\lambda h_{\Delta}(m),\forall m,m'\in \sigma^{\vee},\ \forall\lambda\in \QQ_{\geq 0}\,.$$ 

\begin{definition} \label{ppd}
A variety $Y$ is called semiprojective if it is projective over an affine variety. A \emph{$\sigma$-polyhedral divisor} on $Y$ is a formal sum $\DD=\sum_{H}\Delta_H\cdot H$, where $H$ runs over prime divisors on $Y$, $\Delta_H\in\pol_{\sigma}(N_{\QQ})$, and $\Delta_H=\sigma$ for all but finitely many values of $H$. For $m\in\sigma^{\vee}$ we can evaluate $\DD$ in $m$ by letting $\DD(m)$ be the $\QQ$-divisor
$$\DD(m)=\sum_{H\in Y} h_H(m)\cdot H\,.$$
where $h_H=h_{\Delta_H}$. A $\sigma$-polyhedral divisor $\DD$ is called \emph{proper} if the following hold
\begin{enumerate}[(i)]
\item $\DD(m)$ is semiample\footnote{Recall that a divisor $D$ is semiample if $\OO_Y(rD)$ is globally generated for some $r>0$.} and $\QQ$-Cartier for all $m\in\sigma_M^\vee$, and
\item $\DD(m)$ is big for all $m\in\relint(\sigma^\vee)$.
\end{enumerate}
\end{definition}

The following theorem gives a combinatorial description of $\TT$-varieties analogous to the classical combinatorial description of toric varieties.

\begin{theorem}[\cite{AlHa06}] \label{AH}
To any proper $\sigma$-polyhedral divisor $\DD$ on a semiprojective variety $Y$ one can associate a normal finitely generated effectively $M$-graded domain of dimension $\rank M+\dim Y$ given by\footnote{For a $\QQ$-divisor $D$, we let $\OO_Y(D)=\OO_Y(\lfloor D\rfloor)$, where $\lfloor D\rfloor$ is the integral part of $D$.}
$$A[Y,\DD]=\bigoplus_{m\in\sigma^{\vee}_M} A_m\chi^m,\quad \mbox{where}\quad A_m=H^0(Y,\OO_Y(\DD(m))\subseteq K(Y)\,.$$

Conversely, any normal finitely generated effectively $M$-graded domain is isomorphic to $A[Y,\DD]$ for some semiprojective variety $Y$ and some proper $\sigma$-polyhedral divisor $\DD$ on $Y$. 
\end{theorem}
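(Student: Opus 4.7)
The plan is to prove both directions of this correspondence. For the forward direction, given a proper $\sigma$-polyhedral divisor $\DD$ on a semiprojective variety $Y$, I first verify that $A[Y,\DD]=\bigoplus_{m\in\sigma^\vee_M} A_m\chi^m$ is an $M$-graded $\KK$-algebra. Closure under multiplication, $A_m\cdot A_{m'}\subseteq A_{m+m'}$, follows from the superadditivity $h_{\Delta_H}(m+m')\geq h_{\Delta_H}(m)+h_{\Delta_H}(m')$, which translates to the divisor inequality $\DD(m)+\DD(m')\leq\DD(m+m')$; hence a product of two sections lies in $H^0(Y,\OO_Y(\DD(m+m')))$. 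Effectiveness of the grading is automatic since $\sigma^\vee$ is full-dimensional.

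Finite generation, normality and the dimension formula all use the properness conditions. Semiampleness and $\QQ$-Cartierness of each $\DD(m)$ let one reduce, after passing to a lattice multiple, to the classical statement that the section ring of a semiample big divisor is finitely generated; triangulating $\sigma^\vee_M$ into finitely many subcones and picking one weight in the interior of each yields a finite generating set for $A[Y,\DD]$. The formula $\dim X=\rank M+\dim Y$ is immediate since the generic $\TT$-orbit has dimension $\rank M$ and the field of invariants recovers $K(Y)$. For normality I would realize $A[Y,\DD]$ as the intersection, inside $K(Y)\otimes_\KK \KK[M]$, of the DVRs attached to the prime divisors $H$ appearing in $\DD$ together with the toric valuations on $\KK[M]$; a finite intersection of DVRs with common fraction field is integrally closed.

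For the converse, let $A$ be a normal finitely generated effectively $M$-graded domain. Setting $K_0=\fract(A_0)$, the extension $\fract(A)/K_0$ is purely transcendental of degree $\rank M$. One now seeks a semiprojective model $Y$ with $K(Y)=K_0$ such that every homogeneous component satisfies $A_m=H^0(Y,\OO_Y(D_m))$ for a suitable $\QQ$-divisor $D_m\in\Div_\QQ(Y)$; such a $Y$ can be produced by a Chow/GIT-type quotient construction applied to the $\TT$-action on $\spec A$. For each prime divisor $H\subset Y$ the assignment $m\mapsto\ord_H(D_m)$ from $\sigma^\vee_M$ to $\QQ$ is positively homogeneous and concave, so by Legendre duality it is the support function of a unique polyhedron $\Delta_H\in\pol_\sigma(N_\QQ)$, equal to $\sigma$ for all but finitely many $H$. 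Setting $\DD=\sum_H\Delta_H\cdot H$ yields the desired $\sigma$-polyhedral divisor, and the properness conditions reflect the finite generation and normality of $A$.

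The principal obstacle is the converse direction: the semiprojective model $Y$ is not uniquely determined by $A$ (different birational models of the invariant quotient give different admissible pairs $(Y,\DD)$), and isolating a suitable model while simultaneously verifying the full list of properness conditions requires the GIT/Cox-ring machinery. This step, producing $Y$ canonically as the relative spectrum of a suitable sheaf of invariants and then identifying the correct polyhedral coefficients, is where the bulk of the technical work of Altmann-Hausen is concentrated.
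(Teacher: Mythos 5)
This is the Altmann--Hausen structure theorem, which the paper does not prove: it is imported verbatim from \cite{AlHa06} and used as a black box, so there is no internal proof to compare against. Judged on its own terms, your outline follows the genuine Altmann--Hausen strategy in the forward direction and is essentially sound there: multiplicativity from superadditivity of the support functions, finite generation via reduction to section rings of semiample big divisors, and normality from exhibiting $A[Y,\DD]$ as an intersection of valuation rings of $K(Y)(M)$. Note, though, that this intersection runs over \emph{all} prime divisors of $Y$ together with the toric valuations, not a finite set; the conclusion is unaffected, since any intersection of valuation rings of a fixed field is integrally closed in that field.

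The converse contains a concrete error and a genuine gap. The error: you set $K_0=\fract(A_0)$ and assert that $\fract(A)/K_0$ is purely transcendental of degree $\rank M$. This fails already for $A=\KK[x,y]$ graded by $\deg x=\deg y=1$: there $A_0=\KK$, so $\trdeg_{K_0}(\fract A)=2$ while $\rank M=1$. The field that plays the role of $K(Y)$ is the subfield of degree-zero \emph{fractions} $f/g$ with $f,g$ homogeneous of equal degree (equivalently, the $\TT$-invariant subfield of $\fract A$), which can be strictly larger than $\fract(A_0)$. The gap: the construction of the semiprojective model $Y$ on which every $A_m$ becomes the space of sections of a $\QQ$-divisor, and the verification of the two properness conditions (semiampleness of every $\DD(m)$ and bigness on the relative interior of $\sigma^\vee$), are exactly the content of the theorem, and your proposal defers them to unspecified GIT/Cox-ring machinery without carrying them out. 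Since the paper itself treats the statement purely as a citation, that deferral is consistent with how the result is used here, but as a proof the converse direction is not established.
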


\subsection{Locally nilpotent derivations and $\GA$-actions}

Let $X=\spec\,A$ be an affine variety. A derivation on $A$ is called \emph{locally nilpotent} (LND for short) if for every  $a\in A$ there exists $n\in\ZZ_{\geq 0}$ such that $\partial^n(a)=0$.  Given an LND $\partial$ on $A$, the map $\phi_\partial:\GA\times A\rightarrow A$, $\phi_\partial(t,f)=e^{t\partial}f$ defines a $\GA$-action on $X$, and any $\GA$-action arises in this way. 

In the following lemma we collect some well known facts about LNDs over a field of characteristic 0 (not necessarily algebraically closed), see e.g., \cite{Fre06}.
\begin{lemma} \label{LND}
Let $A$ be a finitely generated normal domain over a field of characteristic 0. For any two LNDs $\partial$ and $\partial'$ on $A$, the following hold.
\begin{enumerate}[(i)]
\item $\ker\partial$ is a normal subdomain of codimension 1.
\item $\ker\partial$ is factorially closed i.e., $ab\in\ker\partial\Rightarrow a,b\in\ker\partial$.
\item If $a\in A$ is invertible, then  $a\in\ker\partial$.
\item If $\ker\partial=\ker\partial'$, then there exist $a,a'\in\ker\partial$ such that $a\partial=a'\partial'$.
\item If $a\in\ker\partial$, then $a\partial$ is again an LND.
\item If $\partial(a)\in (a)$ for some $a\in A$, then $a\in\ker\partial$.
\item  The field extension $\fract(\ker\partial)\subseteq \fract A$ is purely transcendental of degree 1.
\end{enumerate}
\end{lemma}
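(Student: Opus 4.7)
The plan is to derive the seven items in an order that exploits their interdependencies, with (ii) as the workhorse and (vii) as the technical core. For (ii), I would argue by contradiction: if $a, b \neq 0$, $ab \in \ker\partial$, but $\partial(a) \neq 0$, then setting $m = \max\{k : \partial^k(a) \neq 0\}$ and $n = \max\{k : \partial^k(b) \neq 0\}$ (finite by local nilpotence), the Leibniz rule yields
$$\partial^{m+n}(ab) = \binom{m+n}{m}\partial^m(a)\,\partial^n(b) \neq 0$$
in the characteristic-zero domain $A$, contradicting $ab \in \ker\partial$.

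Given (ii), several items fall out directly. Property (iii) follows from $1 = a\cdot a^{-1}\in\ker\partial$ and factorial closedness. Property (v) follows from the identity $(a\partial)^n = a^n\partial^n$ (a short induction using $\partial(a)=0$), which transfers local nilpotence from $\partial$ to $a\partial$. Property (vi) follows by writing $\partial(a)=ca$, showing inductively that $\partial^k(a) = c_k a$ for some $c_k \in A$, and taking the least $k$ with $\partial^k(a)=0$: if $a \neq 0$ and $k \geq 2$, then $c_{k-1}a = \partial^{k-1}(a) \in \ker\partial$ combined with (ii) forces $a \in \ker\partial$, a contradiction; hence $a \in \ker\partial$ outright.

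The heart of the argument is (vii). I would first produce a \emph{local slice}, that is, an element $s \in A$ with $b := \partial(s) \in \ker\partial \setminus \{0\}$ and $\partial^2(s)=0$, obtained by walking to the bottom of the chain of $\partial$-iterates of any element on which $\partial$ acts nontrivially. Setting $B = \ker\partial$, the crucial step is to establish the polynomial-ring identity
$$A[b^{-1}] = B[b^{-1}][s],$$
by showing recursively that every $f \in A$ can be expanded as a polynomial in $s$ with coefficients in $B[b^{-1}]$: the Taylor-type expansion $f = \sum_{i \geq 0} \partial^i(f) s^i / (i!\,b^i)$ terminates because $\partial$ is locally nilpotent, and one checks that each coefficient lies in $B[b^{-1}]$. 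Taking fraction fields gives $\fract A = \fract(B)(s)$ with $s$ transcendental over $\fract(B)$, proving (vii). At this point (i) also follows: $B$ is factorially closed in the normal domain $A$, hence normal (any element of $\fract(B)$ integral over $B$ is integral over $A$, so lies in $A$, so lies in $B$ by (ii)), and (vii) supplies codimension one.

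Finally, for (iv) I would extend $\partial, \partial'$ to $\fract A = \fract(B)(s)$; both vanish on $\fract(B) = \fract(\ker\partial')$, so they act as $b\frac{d}{ds}$ and $g\frac{d}{ds}$ respectively for some $g \in \fract(B)(s)$. A degree-in-$s$ analysis of the iterates $\partial'^n(s)$ forces $g \in \fract(B)$: otherwise, writing $g = P/Q$ with $P, Q$ coprime polynomials in $s$ of degree $\geq 1$ somewhere, the $s$-degree of $\partial'^n(s)$ grows without bound, contradicting local nilpotence of $\partial'$ on $A$. Thus $g/b \in \fract(B)$, so $\partial' = (a'/a)\partial$ with $a, a' \in B$, giving $a\partial' = a'\partial$ as desired. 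The principal technical obstacle is the polynomial presentation $A[b^{-1}] = B[b^{-1}][s]$ at the core of (vii); everything else is bookkeeping around (ii) and this structural fact.
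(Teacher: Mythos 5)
The paper does not prove this lemma at all: it is stated as a collection of well-known facts with a pointer to Freudenburg's book, so there is no internal argument to compare against. Your proof is essentially the standard one from that reference (factorial closedness via the top-degree term of the Leibniz expansion, the local slice $s$ with $b=\partial(s)\in\ker\partial$, the localization $A[b^{-1}]=B[b^{-1}][s]$, and the degree-in-$s$ analysis for the equivalence statement), and the logical architecture — deducing (i), (iii), (iv), (v), (vi) from (ii) and the slice presentation — is sound. One correction is needed in the key step of (vii): the expansion $f=\sum_{i\geq 0}\partial^i(f)\,s^i/(i!\,b^i)$ is false as written (test it on $f=s$, which it sends to $2s$), and its "coefficients" $\partial^i(f)/(i!\,b^i)$ do not lie in $B[b^{-1}]$. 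The correct device is the Dixmier map $\epsilon(f)=\sum_{i\geq 0}\tfrac{(-1)^i}{i!}\,\partial^i(f)\,(s/b)^i$, a ring homomorphism of $A[b^{-1}]$ onto $B[b^{-1}]$ fixing $B[b^{-1}]$, after which $f=\sum_{i\geq 0}\tfrac{1}{i!}\,\epsilon(\partial^i f)\,(s/b)^i$ gives the desired polynomial presentation; with that substitution the rest of your argument, including the deduction of normality of $\ker\partial$ from (ii) and the normality of $A$, goes through. You should also note that (i) and (vii) implicitly assume $\partial\neq 0$, but that caveat is already missing from the statement in the paper.
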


\begin{definition} \label{LND-equiv}
We say that two LNDs $\partial$ and $\partial'$ on $A$ are \emph{equivalent} if $\ker\partial=\ker\partial'$.
\end{definition}

Let $\DD$ be a proper $\sigma$-polyhedral divisor on a semiprojective variety $Y$, and let $A=A[Y,\DD]$ be the corresponding $M$-graded domain. A derivation $\partial$ on $A$ is called \emph{homogeneous} if it sends homogeneous elements into homogeneous elements. Given a homogeneous LND $\partial$, we define its degree as $\deg\partial=\deg \partial(f)-\deg f$ for any homogeneous $f\in A\setminus\ker\partial$. 

Let $K_Y=K(Y)$. A homogeneous LND $\partial$ on $A$ extends to a derivation on $\fract A=K_Y(M)$, where $K_Y(M)$ is the field of fractions of $K_Y[M]$. The LND $\partial$ is said to be \emph{of fiber type} if $\partial(K_Y)=0$ and \emph{of horizontal type} otherwise. Let $X=\spec\,A$. Geometrically speaking, $\partial$ is of fiber type if and only if the general orbits of the corresponding $\GA$-action on $X$ are contained in the closures of general orbits of the $\TT$-action given by the $M$-grading.

\subsection{Locally nilpotent derivations on toric varieties} \label{LND-toric}

In this section we recall the classification of homogeneous LND given in \cite{Lie08} for toric varieties defined over a field $\KK$ of characteristic 0 (not necessarily algebraically closed).

Given dual bases $\{\nu_1,\cdots,\nu_n\}$ and $\{\mu_1,\cdots,\mu_n\}$ for $N$ and $M$, respectively, we define the partial derivative $\partial_{\nu_i}$ with respect to $\nu_i$ as the homogeneous derivation on $\KK[M]$ given by $\partial_{\nu_i}(\chi^{\mu_j})=\langle\mu_j,\nu_i\rangle=\delta_{ij}$. 

For a pointed polyhedral cone $\sigma$ in the vector space $N_{\QQ}$, we let 
$$A=\KK[\sigma^{\vee}_M]=\bigoplus_{m\in\sigma^{\vee}_M}\KK\chi^m$$
be the affine semigroup algebra of the corresponding affine toric variety $X_\sigma=\spec\, A$. 

If $\sigma=\{0\}$, then $A$ is spanned by the characters which are invertible functions. By Lemma \ref{LND} (iii) any LND on $A$ is trivial. In the following, we fix a ray $\rho$ of $\sigma$, and we let $\tau$ be the codimension 1 face of $\sigma^\vee$ dual to $\rho$. Furthermore, letting $\rho_0\in M$ be the primitive vector of $\rho$ we consider $\mu\in M$ such that $\langle\mu,\rho_0\rangle=1$ and  $H=\rho_0^{\bot}\subseteq M_{\QQ}$.

\begin{definition} \label{srho}
We define
$$S_{\rho}=\sigma_1^\vee\cap(H-\mu)\cap M\,,$$
where $\sigma_1$ is the cone spanned by the rays of $\sigma$ except $\rho$. We have $S_\rho\neq \emptyset$. Furthermore, $e+m\in S_\rho$ whenever $e\in S_{\rho}$ and $m\in\tau_M$.
\end{definition}

The following theorem gives a classification of the homogeneous LND of fiber type on $A$.

\begin{theorem} \label{toric}
To any pair $(\rho,e)$, where $\rho$ is a ray of $\sigma$ and $e\in S_\rho$, we can associate a homogeneous LND $\partial_{\rho,e}$ on $A=\KK[\sigma_M^\vee]$ of degree $e$ with kernel $\ker\partial_{\rho,e}=\KK[\tau_M]$. Conversely, If $\partial\neq 0$ is a homogeneous LND on $A$, then  $\partial=\lambda\partial_{\rho,e}$  for some ray $\rho\subseteq\sigma$, some lattice vector $e\in S_\rho$, and some $\lambda\in\KK^*$.
\end{theorem}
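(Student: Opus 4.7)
For the forward direction, define $\partial_{\rho,e}$ on the monomial basis by $\partial_{\rho,e}(\chi^m) = \langle m, \rho_0 \rangle \chi^{m+e}$ and extend $\KK$-linearly. The conditions defining $S_\rho$---namely $\langle e, \rho_0 \rangle = -1$ and $\langle e, \rho' \rangle \geq 0$ for every ray $\rho' \neq \rho$ of $\sigma$---are exactly what is needed for $m + e \in \sigma^\vee_M$ whenever $\langle m, \rho_0 \rangle \neq 0$, making $\partial_{\rho,e}$ well-defined on $A$. The Leibniz rule reduces to additivity of $\langle \cdot, \rho_0 \rangle$, and induction yields $\partial_{\rho,e}^k(\chi^m) = \chi^{m+ke}\prod_{j=0}^{k-1}(\langle m, \rho_0\rangle - j)$, which vanishes once $k > \langle m, \rho_0 \rangle$; hence $\partial_{\rho,e}$ is locally nilpotent with $\ker \partial_{\rho,e}$ spanned by the $\chi^m$ with $\langle m, \rho_0\rangle = 0$, i.e., $\ker \partial_{\rho,e} = \KK[\tau_M]$.

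Conversely, let $\partial \neq 0$ be homogeneous of degree $e$ and write $\partial(\chi^m) = c_m \chi^{m+e}$ (with $c_m = 0$ when $m + e \notin \sigma^\vee_M$). Leibniz yields $c_{m_1+m_2} = c_{m_1} + c_{m_2}$ on $\sigma^\vee_M$, so $c$ is the restriction of $\langle \cdot, p \rangle$ for some $p \in N \otimes_\ZZ \KK$. By Lemma~\ref{LND}(i), the codimension-$1$ condition on $\ker \partial = \KK[\sigma^\vee_M \cap p^\perp]$ forces the $\ZZ$-rank of $c(M) \subseteq (\KK,+)$ to be $1$, so $p = \lambda p'$ with $p' \in N$ primitive and $\lambda \in \KK^*$. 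Factorial closedness (Lemma~\ref{LND}(ii)) then rules out $\langle \cdot, p' \rangle$ taking both signs on $\sigma^\vee_M$: otherwise some $m_1, m_2 \in \sigma^\vee_M$ with $\langle m_1, p'\rangle = a > 0$ and $\langle m_2, p'\rangle = -b < 0$ would give $bm_1 + am_2 \in \ker \partial$ with $m_1 \notin \ker \partial$. So $p' \in \sigma$ after absorbing the sign into $\lambda$. By cone--face duality, $\sigma^\vee \cap (p')^\perp$ has codimension equal to the dimension of the smallest face of $\sigma$ containing $p'$; the codimension-$1$ requirement forces this face to be a ray $\rho$ of $\sigma$, so $p' = \rho_0$.

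It remains to verify $e \in S_\rho$, which then gives $\partial = \lambda \partial_{\rho,e}$. For a ray $\rho' \neq \rho$, the codimension-$2$ face $\rho_0^\perp \cap (\rho')^\perp \cap \sigma^\vee$ is properly contained in the codimension-$1$ face $(\rho')^\perp \cap \sigma^\vee$, so some $m \in \sigma^\vee_M \cap (\rho')^\perp$ has $\langle m, \rho_0 \rangle > 0$; since $c_m \neq 0$, $m + e \in \sigma^\vee_M$, whence $\langle e, \rho' \rangle = \langle m+e, \rho' \rangle \geq 0$. Since $\rho_0$ is primitive in $N$ and $\sigma^\vee_M$ generates $M$ as a group, some $\mu_0 \in \sigma^\vee_M$ satisfies $\langle \mu_0, \rho_0 \rangle = 1$; then $\mu_0 + e \in \sigma^\vee_M$ gives $\langle e, \rho_0 \rangle \geq -1$. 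If $\langle e, \rho_0 \rangle \geq 0$, every $\mu_0 + ke$ stays in $\sigma^\vee_M$ and $\partial^k(\chi^{\mu_0}) = \lambda^k\prod_{j=0}^{k-1}(1 + j\langle e, \rho_0 \rangle)\chi^{\mu_0+ke}$ is never zero, contradicting local nilpotence. Hence $\langle e, \rho_0 \rangle = -1$ and $e \in S_\rho$. The main obstacle is the middle paragraph, where extracting from codimension $1$ and factorial closedness alone the precise conclusion that $p$ is a $\KK^*$-scalar of a primitive ray generator of $\sigma$ requires combining the lattice arithmetic (reducing $p$ from $N \otimes \KK$ to $\KK^* \cdot N$) with the convex geometry (forcing $p' \in \sigma$, and then onto a ray); once $p' = \rho_0$ is pinned down, the identification of $e$ is a routine calculation with the cone.
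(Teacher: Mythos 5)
The paper does not actually prove this statement: it simply cites Lemma 2.6 and Theorem 2.7 of \cite{Lie08}. Your argument supplies a self-contained proof, and it is essentially the standard Demazure-root classification that the cited reference carries out: a homogeneous derivation of degree $e$ is $\chi^m\mapsto\langle m,p\rangle\chi^{m+e}$ for some $p\in N\otimes\KK$, and the LND axioms (codimension-one kernel, factorial closedness, nilpotence on a single well-chosen monomial) pin $p$ down to $\lambda\rho_0$ and $e$ to $S_\rho$. The logic is sound throughout; the one place where your justification is thinner than it should be is the existence of $\mu_0\in\sigma^\vee_M$ with $\langle\mu_0,\rho_0\rangle=1$. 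The fact that $\rho_0$ is primitive and $\sigma^\vee_M$ generates $M$ only shows that the sub-semigroup $\langle\sigma^\vee_M,\rho_0\rangle\subseteq\ZZ_{\geq0}$ generates $\ZZ$ as a group, and such a semigroup (e.g.\ $\{0,2,3,4,\dots\}$) need not contain $1$. The claim is nevertheless true: pick $m_0$ a lattice point in the relative interior of the facet $\tau=\sigma^\vee\cap\rho_0^\perp$ and any $m_1\in M$ with $\langle m_1,\rho_0\rangle=1$; near $\relint\tau$ the only active inequality cutting out $\sigma^\vee$ is $\langle\cdot,\rho_0\rangle\geq0$, so $km_0+m_1\in\sigma^\vee_M$ for $k\gg0$ and has pairing $1$ with $\rho_0$. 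With that patch your proof is complete; compared with the paper's bare citation, it has the advantage of making visible exactly which properties of LNDs (Lemma \ref{LND}(i) and (ii)) force the toric classification.
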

\begin{proof}
The first assertion is Lemma 2.6 in \cite{Lie08} and the second one follows from Theorem 2.7 in \emph{loc. cit.}
\end{proof}

\section{Locally nilpotent derivations of fiber type}
\label{sec-fib}

In this section we give a complete classification of homogeneous LNDs on $\TT$-varieties over an algebraically closed field $\KK$ of characteristic 0. The particular case of complexity 1 is done in \cite[Section 3.1]{Lie08}.

We fix a smooth semiprojective variety $Y$ and a proper $\sigma$-polyhedral divisor 
$$\DD=\sum_{H} \Delta_H\cdot H \quad \mbox{on}\quad Y\,.$$
Letting $K_Y$ be the field of rational functions on $Y$, we consider the affine variety $X=\spec\,A$, where
$$A=A[Y,\DD]=\bigoplus_{m\in\sigma^{\vee}_M}A_m\chi^m,\quad\mbox{with}\quad A_m=H^0\left(Y,\OO(\DD(m))\right)\subseteq K_Y\,.$$

We denote by $h_H$ the support function of $\Delta_H$ so that $\DD(m)=\sum_{H\in Y} h_H(m)\cdot H$. We also fix a homogeneous LND $\partial$ of fiber type on $A$.

We let $\bar{A}=K_Y[\sigma^\vee_M]$ be the affine semigroup algebra over $K_Y$ with cone $\sigma\in N_{\QQ}$. By Lemma Lemma 1.13 in \cite{Lie08} $\partial$ can be extended to a homogeneous locally nilpotent $K_Y$-derivation $\bar{\partial}$ on $\bar{A}$. 

If $\sigma$ has no ray i.e., $\sigma=\{0\}$, then $\bar{\partial}=0$ by Theorem \ref{toric} and so $\partial$ is trivial. In the sequel we assume that $\sigma$ has at least one ray, say $\rho$. Let $\tau$ be its dual codimension 1 face, and let $S_\rho$ be as defined in Lemma \ref{srho}. 

\begin{definition}
For any $e\in S_\rho$, we let $D_e$ be the $\QQ$-divisor on $Y$ defined by\footnote{cf. Lemma 3.3 in \cite{Lie08}}
$$D_e:=\sum_{H}\max_{m\in\sigma^\vee_M\setminus\tau_M}(h_H(m)-h_H(m+e))\cdot H\,.$$
\end{definition}

\begin{remark} \label{Degood}
An alternative description of $D_e$ is as follows. Since the function $h_H$ is concave and piecewise linear on $\sigma^\vee$, the above maximum is achieved by one of the linear pieces of $h_H$ i.e., by one of the maximal cones in the normal quasifan $\Lambda(h_H)$.

For every prime divisor $H$ on $Y$, we let $\{\delta_{1,H},\cdots,\delta_{\ell_H,H}\}$ be the set of all maximal cones in $\Lambda(h_H)$ and $g_{r,H},\ r\in\{1,\cdots,\ell_H\}$ be the linear extension of $h_H|_{\delta_{r,H}}$ to $M_{\QQ}$. Since the maximum is achieved on one of the linear pieces we have
$$\max_{m\in\sigma^\vee_M\setminus\tau_M}(h_H(m)-h_H(m+e))=\max_{r\in\{1,\cdots,\ell_H\}}(-g_{r,H}(e))=-\min_{r\in\{1,\cdots,\ell_H\}}g_{r,H}(e)\,.$$

Since $\tau$ is a codimension 1 face of $\sigma^\vee$, it is contained as a face in one and only one maximal cone $\delta_{r,H}$. We may assume that $\tau\subseteq\delta_{1,H}$. By the concavity of $h_H$ we have $\ g_{1,H}(e)\leq g_{r,H}(e)$ $\forall r$ and so 
$$D_e=-\sum_{H} g_{1,H}(e)\cdot H\,.$$
\end{remark}

We need the following lemma.

\begin{lemma} \label{phie}
For any $e\in S_\rho$ we define $\Phi_e=H^0(Y,\OO_Y(-D_e))$. If $\varphi\in K_Y$ then $\varphi\in\Phi_e$ if and only if $\varphi A_m\subseteq A_{m+e}$ for any $m\in\sigma^\vee_M\setminus\tau_M$.
\end{lemma}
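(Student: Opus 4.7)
The strategy is to translate both sides of the equivalence into divisorial inequalities on $Y$ and match them up. For $\varphi\in K_Y$ and a prime divisor $H$, membership in $A_m$ is equivalent to $\ord_H(\varphi)\geq -\lfloor h_H(m)\rfloor$ (recall the convention $\OO_Y(D)=\OO_Y(\lfloor D\rfloor)$), and $\varphi\in\Phi_e$ is equivalent to $\ord_H(\varphi)\geq \lceil d_H\rceil$, where $d_H=-g_{1,H}(e)$ is the coefficient of $H$ in $D_e$ as computed in Remark~\ref{Degood}.

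For the \emph{if} direction, I would assume $\varphi\in\Phi_e$ and take $m\in\sigma^\vee_M\setminus\tau_M$ together with an arbitrary $f\in A_m$. Checking the target $\varphi f\in A_{m+e}$ prime by prime reduces to the integer inequality $\lfloor h_H(m)\rfloor-\lfloor h_H(m+e)\rfloor\leq\lceil d_H\rceil$ for every $H$, which follows from the defining bound $h_H(m)-h_H(m+e)\leq d_H$ combined with the elementary estimate $\lfloor a\rfloor-\lfloor b\rfloor\leq\lceil a-b\rceil$.

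The \emph{only if} direction is the substantive one. Fixing a prime $H$, the task is to exhibit a pair $(m,f)$ with $m\in\sigma^\vee_M\setminus\tau_M$, $f\in A_m$, and $\ord_H(f)$ as negative as the support function allows, so that the assumed containment $\varphi f\in A_{m+e}$ extracts the desired bound on $\ord_H(\varphi)$. I would pick a lattice point $m$ in the relative interior of the unique top-dimensional cone $\delta_{1,H}$ of $\Lambda(h_H)$ containing $\tau$; since $\tau$ is then a codimension-one face of $\delta_{1,H}$ disjoint from its relative interior, $m$ lies in $\sigma^\vee_M\setminus\tau_M$ and $h_H$ agrees with $g_{1,H}$ on a whole neighborhood of $m$. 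Using the properness of $\DD$, the semiample $\QQ$-Cartier divisor $\DD(m)$ combined with positive homogeneity $\DD(rm)=r\DD(m)$ yields, for $r\gg 0$, a Cartier base-point-free divisor $\DD(rm)$, and hence a section $f\in A_{rm}$ with $\ord_H(f)=-rh_H(m)\in\ZZ$. Enlarging $r$ if necessary so that $rm+e$ also lies in $\delta_{1,H}$, we get $\lfloor h_H(rm+e)\rfloor=rh_H(m)+\lfloor g_{1,H}(e)\rfloor$, and the hypothesis $\varphi f\in A_{rm+e}$ then forces $\ord_H(\varphi)\geq -\lfloor g_{1,H}(e)\rfloor=\lceil d_H\rceil$, as required. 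The main obstacle is precisely the construction of this section $f$ saturating the pole at $H$: semiampleness of $\DD(m)$ (together with passage to a large multiple) is what makes the bound sharp, and is the only place where properness of $\DD$ is used in an essential way.
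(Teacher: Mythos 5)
Your proof is correct and follows essentially the same route as the paper: the forward direction comes from the divisorial inequality $\divi(\varphi)\geq D_e\geq \DD(m)-\DD(m+e)$, and the converse from testing against a section $f\in A_m$ with $\ord_H(f)=-h_H(m)$ for $m$ chosen in the interior of the linearity cone $\delta_{1,H}$ containing $\tau$. Your only deviation is cosmetic but welcome: you make explicit (via semiampleness and passage to a multiple $rm$) why such a saturating section exists, a point the paper's proof leaves implicit.
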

\begin{proof}
If $\varphi\in\Phi_e$, then for every $m\in\sigma^\vee_M\setminus\tau_M$,
$$\divi(\varphi)\geq  D_e\geq \sum_{H}(h_z(m)-h_z(m+e))\cdot H=\DD(m)-\DD(m+e)\,.$$
If $f\in \varphi A_m$ then $\divi(f)+\DD(m)\geq \divi(\varphi)$ and so $\divi(f)+\DD(m+e)\geq 0$. Thus $\varphi A_m\subseteq A_{m+e}$.

To prove the converse, we let $\varphi\in K_Y$ be such that $\varphi A_m\subseteq A_{m+e}$ for any $m\in\sigma^\vee_M\setminus\tau_M$. With the notation of Remark \ref{Degood}, we let $m\in M$ be a lattice vector such that $\DD(m)$ is an integral divisor, and $m$ and $m+e$ belong to $\relint(\delta_{1,H})$, for any prime divisor $H$.

For every $H\in\supp\DD$, we let $f_H\in A_m$ be a rational function such that $$\ord_H(f_H)=-h_H(m)=-g_{1,H}(m)\,.$$
By our assumption $\varphi f_H\in A_{m+e}$ and so 
$$\ord_H(\varphi f_H)\geq -h_H(m+e)=-g_{1,H}(m+e)\,.$$

This yields $\ord_H(\varphi)\geq -g_{1,H}(m+e)+g_{1,H}(m)=-g_{1,H}(e)$ and so $\varphi\in\Phi_e$. This proves the lemma.
\end{proof}

The following theorem gives a classification of LNDs of fiber type on normal affine $\TT$-varieties. We let $\Phi_e^*=\Phi_e\setminus \{0\}$.

\begin{theorem} \label{fiber}
To any triple $(\rho,e,\varphi)$, where $\rho$ is a ray of $\sigma$, $e\in S_\rho$, and $\varphi\in\Phi_e^*$, we can associate a homogeneous LND $\partial_{\rho,e,\varphi}$ of fiber type on $A=A[Y,\DD]$ of degree $e$ with kernel 
$$\ker\partial_{\rho,e,\varphi}=\bigoplus_{m\in\tau_M}A_m\chi^m\,.$$

Conversely, every non-trivial homogeneous LND $\partial$ of fiber type on $A$ is of the form $\partial=\partial_{\rho,e,\varphi}$ for some ray $\rho\subseteq\sigma$, some lattice vector $e\in S_\rho$, and some function $\varphi\in\Phi_e^*$.
\end{theorem}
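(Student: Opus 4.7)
The plan is to reduce the entire classification to the toric case over the function field $K_Y$, using the extension $\bar A=K_Y[\sigma^{\vee}_M]$ introduced just before Lemma \ref{phie}. The text has already recalled that any homogeneous LND of fiber type on $A$ extends uniquely to a homogeneous $K_Y$-derivation $\bar\partial$ on $\bar A$ (Lemma 1.13 in \cite{Lie08}), and Theorem \ref{toric} (formulated over an arbitrary characteristic zero field, see Section 1.3) classifies homogeneous $K_Y$-LNDs on $\bar A$. The bridge between $\bar A$-derivations and $A$-derivations is provided by Lemma \ref{phie}, which translates the algebraic condition $\varphi A_m\subseteq A_{m+e}$ into membership in $\Phi_e$.

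For the existence direction, given a triple $(\rho,e,\varphi)$, I would apply Theorem \ref{toric} over $K_Y$ to obtain a homogeneous $K_Y$-LND $\bar\partial_{\rho,e}$ on $\bar A$ of degree $e$ with kernel $K_Y[\tau_M]$. Since $\varphi\in K_Y\subseteq \ker\bar\partial_{\rho,e}$, Lemma \ref{LND}(v) ensures that $\varphi\bar\partial_{\rho,e}$ is again an LND on $\bar A$ with the same kernel and degree $e$. I would then define $\partial_{\rho,e,\varphi}$ as the restriction of $\varphi\bar\partial_{\rho,e}$ to $A$ and verify well-definedness piece by piece: on $A_m\chi^m$ with $m\in\tau_M$ the image is zero because $\chi^m\in\ker\bar\partial_{\rho,e}$, and on $A_m\chi^m$ with $m\in\sigma^\vee_M\setminus\tau_M$ the image lies in $\varphi A_m\chi^{m+e}\subseteq A_{m+e}\chi^{m+e}$ by Lemma \ref{phie}. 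Local nilpotency on $A$ is inherited from $\bar A$, the degree is $e$ by construction, and the kernel equals $A\cap K_Y[\tau_M]=\bigoplus_{m\in\tau_M}A_m\chi^m$.

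For the converse, let $\partial$ be a nontrivial homogeneous LND of fiber type on $A$. I would extend it to a homogeneous $K_Y$-LND $\bar\partial$ on $\bar A$ as already indicated before Lemma \ref{phie}, and then invoke Theorem \ref{toric} over $K_Y$ to obtain $\bar\partial=\lambda\bar\partial_{\rho,e}$ for some ray $\rho\subseteq\sigma$, some $e\in S_\rho$, and some nonzero $\lambda\in K_Y$. Setting $\varphi:=\lambda$, the condition that $\bar\partial$ sends the subalgebra $A$ into itself is, on each homogeneous piece $A_m\chi^m$ with $m\in\sigma^\vee_M\setminus\tau_M$ (the only pieces on which $\bar\partial_{\rho,e}$ acts nontrivially), precisely $\varphi A_m\subseteq A_{m+e}$. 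By Lemma \ref{phie} this is exactly $\varphi\in\Phi_e$, and nontriviality of $\partial$ forces $\varphi\neq 0$, so $\varphi\in\Phi_e^*$ and $\partial=\partial_{\rho,e,\varphi}$.

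The main obstacle is not conceptual but careful bookkeeping: one has to confirm that Theorem \ref{toric} applies verbatim with the base field $\KK$ replaced by $K_Y$ (which is why Section 1.3 insisted on removing the algebraic closedness hypothesis), and one must handle the edge case $m\in\tau_M$ separately since the condition in Lemma \ref{phie} is stated only for $m\in\sigma^\vee_M\setminus\tau_M$. Once these are in place, everything follows mechanically from Theorem \ref{toric} applied over $K_Y$, Lemma \ref{phie}, and Lemma \ref{LND}(v).
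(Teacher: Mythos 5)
Your proposal is correct and follows essentially the same route as the paper's own proof: both directions reduce to the toric classification over $K_Y$ (Theorem \ref{toric}) via the extension to $\bar A=K_Y[\sigma^\vee_M]$, with Lemma \ref{phie} supplying the equivalence between stability of $A$ under $\varphi\partial_{\rho,e}$ and $\varphi\in\Phi_e$. The only cosmetic difference is that you cite Lemma \ref{LND}(v) for the fact that $\varphi\bar\partial_{\rho,e}$ is again an LND, where the paper simply asserts it.
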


\begin{proof}
Letting $\bar{A}=K_Y[\sigma^\vee_M]$, we consider the $K_Y$-LND $\partial_{\rho,e}$ on $\bar{A}$ as in Theorem \ref{toric}. Since $\varphi\in K_Y^*$, $\varphi\partial_{\rho,e}$ is again a $K_Y$-LND on $\bar{A}$.

We claim that $\varphi\partial_{\rho,e}$ stabilizes $A\subseteq \bar{A}$. Indeed, let $f\in A_m\subseteq K_Y$ be a homogeneous element. If $m\in\tau_M$, then $\varphi\partial_{\rho,e}(f\chi^m)=0$. If $m\in\sigma^\vee_M\setminus\tau_M$, then
$$\varphi\partial_{\rho,e}(f\chi^m)=\varphi f\partial_{\rho,e}(\chi^m)=m_0\varphi f\chi^{m+e}\,,$$
where $m_0:=\langle m,\rho_0\rangle\in \ZZ_{>0}$ for the primitive vector $\rho_0$ of the ray $\rho$. By Lemma \ref{phie}, $m_0\varphi f\chi^{m+e}\in A_{m+e}$, proving the claim.

Finally  $\partial_{\rho,e,\varphi}:=\varphi\partial_{\rho,e}|_A$ is a homogeneous LND on $A$ with kernel 
$$\ker\partial_{\rho,e,\varphi}=A\cap\ker\partial_{\rho,e}=\bigoplus_{m\in\tau_M}(A_m\cap K_Y)\chi^m=\bigoplus_{m\in\tau_M}A_m\chi^m\,,$$
as desired.

To prove the converse, we let $\partial$ be a homogeneous LND on $A$ of fiber type. Since $\partial$ is of fiber type, $\partial|_{K_Y}=0$ and so $\partial$ can be extended to a $K_Y$-LND $\bar{\partial}$ on the affine semigroup algebra $\bar{A}=K_Y[\sigma^\vee_M]$. By Theorem \ref{toric} we have $\bar{\partial}=\varphi\partial_{\rho,e}$ for some ray $\rho$ of $\sigma$, some $e\in S_\rho$ and some $\varphi\in K_Y^*$. Since $A$ is stable under $\varphi\partial_{\rho,e}$, by Lemma \ref{phie} $\varphi\in\Phi_e^*$ and so $\partial=\varphi\partial_{\rho,e}|_A=\partial_{\rho,e,\varphi}$.
\end{proof}

\begin{corollary} \label{fgfib}
Let $A$ be a normal finitely generated effectively $M$-graded domain, where $M$ is a lattice of finite rank, and let $\partial$ be a homogeneous LND on $A$. If $\partial$ is of fiber type, then $\ker\partial$ is finitely generated.
\end{corollary}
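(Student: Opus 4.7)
The plan is to realize $\ker\partial$ as the ring of invariants of an appropriate subtorus action on $X = \spec A$, whereupon finite generation follows from the classical Hilbert--Nagata theorem for reductive group actions.

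First I would apply Theorem \ref{AH} to write $A = A[Y,\DD]$ for some semiprojective variety $Y$ (which, if necessary, can be replaced by a resolution of singularities so as to be smooth, pulling back $\DD$) and some proper $\sigma$-polyhedral divisor $\DD$. Theorem \ref{fiber} then produces a ray $\rho \subseteq \sigma$ such that, setting $\tau = \rho^\perp \cap \sigma^\vee$ for the codimension $1$ face of $\sigma^\vee$ dual to $\rho$, one has
$$\ker\partial = \bigoplus_{m \in \tau_M} A_m \chi^m.$$

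Next I would introduce the saturated sublattice $M_\tau := M \cap \mathrm{Span}_{\QQ}(\tau) \subseteq M$. Dually, the inclusion $M_\tau \hookrightarrow M$ corresponds to a surjection of tori $\pi \colon \TT \twoheadrightarrow \TT_\tau := \spec \KK[M_\tau]$ whose kernel $\TT' := \ker \pi$ is a one-dimensional subtorus of $\TT$. Since $\tau$ is a face of $\sigma^\vee$, the hyperplane $\mathrm{Span}_{\QQ}(\tau)$ meets $\sigma^\vee$ exactly in $\tau$, so $M_\tau \cap \sigma^\vee_M = \tau_M$. A homogeneous component $A_m \chi^m$ is fixed by $\TT'$ if and only if $m$ is trivial on $\TT'$, i.e., $m \in M_\tau$; hence
$$A^{\TT'} = \bigoplus_{m \in \sigma^\vee_M \cap M_\tau} A_m \chi^m = \bigoplus_{m \in \tau_M} A_m \chi^m = \ker\partial.$$

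Since $\TT'$ is a torus, hence reductive, acting rationally on the finitely generated $\KK$-algebra $A$, the Hilbert--Nagata theorem immediately yields that $A^{\TT'} = \ker\partial$ is finitely generated over $\KK$. The only substantive step is the identification of $\ker\partial$ with a torus invariant ring, which is forced by the face structure provided by Theorem \ref{fiber}; once it is in place, finite generation is a formal consequence of the classical theory of invariants of reductive groups.
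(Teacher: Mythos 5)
Your proof is correct, but it takes a genuinely different route from the paper's. You identify $\ker\partial=\bigoplus_{m\in\tau_M}A_m\chi^m$ with the ring of invariants $A^{\TT'}$ of the one-dimensional subtorus $\TT'\subseteq\TT$ dual to the quotient $M/M_\tau$, and then invoke the Hilbert--Nagata finiteness theorem for reductive groups; the identification is sound because $\mathrm{Span}_{\QQ}(\tau)\cap\sigma^\vee=\tau$ for a face $\tau$ of $\sigma^\vee$, and $M_\tau$ being saturated makes $\TT'$ a genuine (connected, in any case diagonalizable and hence reductive) subtorus. The paper instead argues directly and elementarily: it picks homogeneous generators $a_1,\ldots,a_r$ of $A$, orders them so that $\deg a_i\in\tau_M$ exactly for $i\leq s$, and uses the face property of $\tau$ (namely that $\sum m_i\in\tau_M$ with $m_i\in\sigma^\vee_M$ forces every $m_i\in\tau_M$) to conclude that every monomial of an element of $\ker\partial$ involves only $a_1,\ldots,a_s$. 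Both arguments pivot on the same combinatorial fact that $\tau$ is a face; yours buys a one-line conclusion from classical invariant theory, while the paper's buys an explicit generating set for $\ker\partial$ at no extra cost in machinery. One small remark: your aside about replacing $Y$ by a resolution of singularities is not needed here --- only the $M$-graded structure and the face $\tau$ enter the argument, not any smoothness of $Y$.
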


\begin{proof}
Let $A=A[Y,\DD]$, where $\DD$ is a proper $\sigma$-polyhedral divisor on a semiprojective variety $Y$. In the notation of Theorem \ref{fiber} we have $\partial=\partial_{\rho,e,\varphi}$, where $\rho$ is a ray of $\sigma$. Letting $\tau\subseteq\sigma^\vee$ be the codimension 1 face dual to $\rho$, by Theorem \ref{fiber} we have $\ker\partial=\bigoplus_{m\in\tau_M} A_m\chi^m$.

Let $a_1,\ldots,a_r$ be a set of homogeneous generators of $A$. Without loss of generality, we may assume further that $\deg a_i\in\tau_M$ if and only if $1\leq i\leq s<r$. We claim that $a_1,\ldots,a_s$ generate $\ker\partial$. Indeed, let $P$ be any polynomial such that $P(a_1,\ldots,a_r)\in\ker\partial$. Since $\tau\subseteq\sigma^\vee$ is a face, $\sum m_i\in \tau_M$ for $m_i\in\sigma_M^\vee$ implies that $m_i\in\tau$ $\forall i$. Hence all the monomials composing $P(a_1,\ldots,a_r)$ are monomials in $a_1,\ldots,a_s$, proving the claim.
\end{proof}

\begin{corollary}
Let as before $\partial$ be a homogeneous LND of fiber type on $A=A[Y,\DD]$, and let $f\chi^m\in A\setminus\ker\partial$ be a homogeneous element. Then $\partial$ is completely determined by the image $g\chi^{m+e}:=\partial(f\chi^m)\in A_{m+e}\chi^{m+e}$.
\end{corollary}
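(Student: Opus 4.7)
The plan is to recover the full triple $(\rho, e, \varphi)$ of Theorem \ref{fiber} directly from the single homogeneous image $g\chi^{m+e}$. By that theorem, $\partial = \partial_{\rho, e, \varphi}$ for some ray $\rho\subseteq\sigma$, some $e\in S_\rho$, and some $\varphi\in\Phi_e^*$; so it suffices to show that each of these three ingredients is forced by the image.

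First, the degree vector $e$ is read off immediately as $e=\deg(g\chi^{m+e})-\deg(f\chi^m)$, which is visible from the two known homogeneous components.

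Next, I would recover $\rho$ from $e$ using the description of $S_\rho$ in Definition \ref{srho}. By construction $S_\rho\subseteq\sigma_1^\vee\cap(H-\mu)$, with $H=\rho_0^\perp$ and $\langle\mu,\rho_0\rangle=1$; hence $\langle e,\rho_0\rangle=-1<0$, while for every other ray $\rho'\neq\rho$ of $\sigma$ the primitive vector $\rho'_0$ lies in $\sigma_1$ and therefore $\langle e,\rho'_0\rangle\geq 0$. Consequently $\rho$ is the unique ray of $\sigma$ on which the pairing with $e$ is strictly negative, so $e$ already determines $\rho$ uniquely.

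Finally, since $f\chi^m\notin\ker\partial=\bigoplus_{n\in\tau_M}A_n\chi^n$, we have $m\in\sigma^\vee_M\setminus\tau_M$, so the integer $m_0:=\langle m,\rho_0\rangle$ is strictly positive. The explicit formula established in the proof of Theorem \ref{fiber} gives
$$\partial(f\chi^m)=m_0\,\varphi f\,\chi^{m+e},$$
and comparing with $g\chi^{m+e}$ yields $\varphi=g/(m_0 f)\in K_Y^*$, which by Lemma \ref{phie} automatically lies in $\Phi_e^*$. Thus $\varphi$ is uniquely recovered, and the whole LND $\partial=\partial_{\rho,e,\varphi}$ is determined by the image.

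The only mildly subtle step is the uniqueness of $\rho$ given $e$, since a priori different rays could admit the same degree vector; this is resolved by the sign argument above using Definition \ref{srho}. Everything else is a direct reading of the formulas supplied by Theorem \ref{fiber}.
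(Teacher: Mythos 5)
Your proof is correct and follows essentially the same route as the paper: read off $e$ as the degree, note that $e$ determines $\rho$, and recover $\varphi=g/(m_0f)$ from the formula $\partial(f\chi^m)=m_0\varphi f\chi^{m+e}$ established in the proof of Theorem \ref{fiber}. The only difference is that where the paper cites Corollary 2.8 of \cite{Lie08} for the uniqueness of $\rho$ given $e$, you supply the short sign argument ($\langle e,\rho_0\rangle=-1<0$ versus $\langle e,\rho'_0\rangle\geq 0$ for all other rays) explicitly, which is a correct and self-contained substitute.
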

\begin{proof}
By the previous theorem $\partial=\partial_{\rho,e,\varphi}$ for some ray $\rho$, some $e\in S_\rho$, and some $\varphi\in\Phi_e$, where $e=\deg\partial$ and $\rho$ is uniquely determined by $e$, see Corollary 2.8 in \cite{Lie08}.

In the course of the proof of Lemma \ref{fiber} it was shown that $\partial_{\rho,e,\varphi}(f\chi^m)=m_0\varphi f\chi^{m+e}$. Thus $\varphi=\frac{g}{m_0f}\in K_0$ is also uniquely determined by our data.
\end{proof}

It might happen that $\Phi_e^*$ as above is empty. Given a ray $\rho\subseteq\sigma$, in the following lemma we give a criterion for the existence of $e\in S_\rho$ such that $\Phi_e^*$ is non-empty.

\begin{theorem} \label{conv-fib}
Let $A=A[Y,\DD]$, and let $\rho\subseteq\sigma$ be the ray dual to a codimension one face $\tau\subseteq \sigma^\vee$. There exists $e\in S_\rho$ such that $\dim\Phi_e$ is positive if and only if the divisor $\DD(m)$ is big for all lattice vectors $m\in\relint(\tau)$.
\end{theorem}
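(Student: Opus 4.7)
The plan is to use a translation identity derived from Remark~\ref{Degood} that relates $-D_e$ to the divisors $\DD(m)$ for $m\in\tau$. Since $g_{1,H}$ is linear on $M_\QQ$ and satisfies $g_{1,H}|_\tau = h_H|_\tau$, for any $e\in S_\rho$ and $m\in\tau_M$ the point $e+m$ again lies in $S_\rho$ (using $\langle m,\rho_0\rangle=0$ together with $\tau\subseteq\sigma^\vee\subseteq\sigma_1^\vee$), and moreover
$$-D_{e+m}\;=\;\sum_H g_{1,H}(e+m)\cdot H\;=\;\sum_H g_{1,H}(e)\cdot H+\sum_H h_H(m)\cdot H\;=\;-D_e+\DD(m)\,.$$

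For the direction $(\Leftarrow)$, assuming $\DD(m)$ is big for every $m\in\relint(\tau)\cap M$, I would fix $e_0\in S_\rho$ (nonempty by Lemma~\ref{srho}) together with $m_0\in\relint(\tau)\cap M$. Setting $e_k := e_0 + k m_0\in S_\rho$ for $k\in\ZZ_{\geq 0}$, the identity above reads $-D_{e_k}=-D_{e_0}+k\DD(m_0)$. Since $\DD(m_0)$ is big and $-D_{e_0}$ is a fixed $\QQ$-divisor, the standard fact that $\dim H^0(Y,\OO_Y(kD+E))>0$ for $k$ large whenever $D$ is big and $E$ is arbitrary (decompose $E=E^+-E^-$ into effective parts; then $H^0(\OO(kD+E))\supseteq H^0(\OO(kD-E^-))$, nonzero for large $k$) yields $\dim\Phi_{e_k}>0$ for $k$ sufficiently large.

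For the direction $(\Rightarrow)$, I would fix $\varphi\in\Phi_e^*$ and form, via Theorem~\ref{fiber}, the homogeneous LND $\partial:=\partial_{\rho,e,\varphi}$ of fiber type on $A$ with kernel $B:=\bigoplus_{m\in\tau_M}A_m\chi^m$. By Lemma~\ref{LND}(i), $\dim B=\dim A-1=(n-1)+\dim Y$, where $n=\rank M$. Corollary~\ref{fgfib} shows $B$ is finitely generated, and since $B$ is normal and effectively graded by the rank-$(n-1)$ lattice $\tau_M-\tau_M$, Theorem~\ref{AH} produces a presentation $B\simeq A[Y_B,\DD_B]$ for some proper polyhedral divisor $\DD_B$ on a semiprojective variety $Y_B$. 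The Altmann--Hausen dimension formula $\dim B=(n-1)+\dim Y_B$ then forces $\dim Y_B=\dim Y$. Properness of $\DD_B$ gives bigness of $\DD_B(m)$ on $Y_B$ for all $m\in\relint(\tau)\cap M$, so $\dim H^0(Y_B,\OO(k\DD_B(m)))$ grows as $k^{\dim Y}$. Since $A_{km}=H^0(Y,\OO(k\DD(m)))=H^0(Y_B,\OO(k\DD_B(m)))$ for $m\in\tau_M$, bigness transfers to $\DD(m)$ on $Y$.

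The main obstacle is the second direction: concretely, matching the codimension-one statement of Lemma~\ref{LND}(i) with the Altmann--Hausen dimension formula for $B$ to secure the equality $\dim Y_B=\dim Y$. Once this is established, the transfer of bigness from $\DD_B(m)$ to $\DD(m)$ is purely formal via the equality of section spaces $A_{km}$.
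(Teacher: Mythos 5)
Your proof is correct, and while the forward half coincides with the paper's argument, the converse half takes a genuinely different route. For the direction ``big $\Rightarrow$ sections'', your choice $e_k=e_0+km_0$ with $m_0\in\relint(\tau)\cap M$ is literally the paper's choice in disguise: the paper takes $e=r\cdot m-\mu=(r-1)m+(m-\mu)$ with $m\in\relint(\tau)\cap(S_\rho+\mu)$, i.e.\ $e_0=m-\mu$ and $k=r-1$; the paper justifies nonvanishing of $H^0(Y,\OO_Y(-D_e))$ via openness of the big cone in $\Div_\RR(Y)$, whereas you use the equivalent fact that $kD+E$ acquires sections for $k\gg0$ when $D$ is big --- both are standard and your translation identity $-D_{e+m}=-D_e+\DD(m)$ is a clean way to package Remark~\ref{Degood}. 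For the other direction the paper argues by contraposition: if $\DD(m)$ fails to be big for one $m\in\relint(\tau)$ then (by convexity and openness of the big cone) for all $m\in\tau$, whence $\dim B<\dim A-1$ and $B$ cannot be the kernel of an LND by Lemma~\ref{LND}(i). You instead run the implication forward, using Lemma~\ref{LND}(i) to pin $\dim B=\dim A-1$, then reapplying Theorem~\ref{AH} to $B$ itself to get a presentation $A[Y_B,\DD_B]$ whose properness supplies the $k^{\dim Y_B}$ growth of $\dim B_{km}$, transferred back to $Y$ through $B_{km}=H^0(Y,\OO_Y(k\DD(m)))$. This is heavier machinery (it also needs Corollary~\ref{fgfib}, which in the paper comes \emph{before} this theorem, so there is no circularity), but it has the virtue of making explicit the quantity that both arguments really hinge on, namely the Iitaka-type growth of the graded pieces of $B$; the paper's one-line estimate $\dim B<n+k-1$ is secretly the same computation. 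One small point to tidy: the weight monoid of $B$ need only generate a finite-index sublattice of $\tau_M-\tau_M$, so ``effectively graded by the rank-$(n-1)$ lattice'' should be read as ``by the sublattice generated by the weights''; since the rank is still $n-1$ the dimension count $\dim Y_B=\dim Y$ is unaffected.
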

\begin{proof}
Assuming that $\DD(m)$ is big for all lattice vector $m\in\relint(\tau)$, we consider the linear map
$$G:M_{\QQ}\rightarrow \Div_\QQ(Y),\quad m\mapsto\sum_H g_{1,H}(m)\cdot H\,,$$ 
so that $G(m)=\DD(m)$ for all $m\in\tau$ and $D_e=-G(e)$ for all $e\in S_\rho$. Choosing $m\in\relint(\tau)\cap(S_\rho+\mu)$ and $r\in\ZZ_{>0}$, we let
$j=m-\tfrac{1}{r}\cdot\mu$. We consider the divisor
$$G(j)=G(m)-\tfrac{1}{r}\cdot G(\mu)=\DD(m)-\tfrac{1}{r}\cdot G(\mu)\,.$$

Since $\DD(m)$ is big and the big cone is open in $\Div_\RR(Y)$ (see \cite[Def. 2.2.25]{Laz04}), by choosing $r$ big enough, we may assume that $G(j)$ is big. Furthermore, possible increasing $r$, we may assume that $G(r\cdot j)$ has a section. Now, $r\cdot j=r\cdot m-\mu=(r-1)\cdot m+ (m-\mu)$. Since $(r-1)\cdot m\in\tau_M$ and $m-\mu\in S_\rho$, we have $r\cdot j\in S_\rho$. Letting $e=r\cdot j\in S_\rho$ we obtain $D_e=-G(e)$ and so $\dim H^0(Y,O_Y(-D_e))$ is positive.

Assume now that there is $m\in\relint(\tau)$ such that $\DD(m)$ is not big. Since the set of big divisors is and open and convex subset in $\Div_\RR(Y)$, the divisor $\DD(m)$ is not big whatever is $m\in\tau$. We let $B$ be the algebra 
$$B=\bigoplus_{m\in\tau_M}A_m\chi^m\,.$$
Under our assumption $\dim B<n+k-1$. Since $\dim A=n+k$, by Lemma \ref{LND} $(i)$ $B$ cannot be the kernel of an LND on $A$. The latter implies, by Theorem \ref{fiber} that there is no $e\in S_\rho$ such that $\dim\Phi_e$ is positive.
\end{proof}

Finally, we deduce the following corollary.

\begin{corollary} \label{feq}
Two homogeneous LNDs of fiber type $\partial=\partial_{\rho,e,\varphi}$ and $\partial'=\partial_{\rho',e',\varphi'}$ on $A=A[Y,\DD]$ are equivalent if and only if $\rho=\rho'$. Furthermore, the equivalence classes of homogeneous LNDs of fiber type on $A$ are in one to one correspondence with the rays $\rho\subseteq\sigma$ such that $\DD(m)$ is big $\forall m\in\relint(\tau)$, where $\tau$ is the codimension 1 face dual to $\rho$.
\end{corollary}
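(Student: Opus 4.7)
The plan is to reduce both statements to the explicit description of kernels given in Theorem \ref{fiber}, together with the existence criterion of Theorem \ref{conv-fib}.

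For the first assertion, the implication $\rho=\rho'\Rightarrow\partial\sim\partial'$ is immediate: if $\rho=\rho'$ then the dual codimension-one faces $\tau,\tau'\subseteq\sigma^\vee$ coincide, and Theorem \ref{fiber} yields
\[
\ker\partial_{\rho,e,\varphi}=\bigoplus_{m\in\tau_M}A_m\chi^m=\bigoplus_{m\in\tau'_M}A_m\chi^m=\ker\partial_{\rho',e',\varphi'}.
\]
For the converse, I would show that the graded subalgebra $B_\tau:=\bigoplus_{m\in\tau_M}A_m\chi^m$ determines $\tau$. Its weight semigroup is $\Omega_\tau:=\{m\in\tau_M : A_m\neq 0\}$. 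The key observation is that $\Omega_\tau$ generates $\tau$ as a convex cone: for any $m\in\tau_M$, properness of $\DD$ gives that $\DD(m)$ is semiample and $\QQ$-Cartier, so $\OO_Y(r\DD(m))$ is globally generated for some $r>0$, whence $A_{rm}\neq 0$ and $rm\in\Omega_\tau$. Hence $\tau=\cone(\Omega_\tau)$ is recovered from $B_\tau$, and $\ker\partial=\ker\partial'$ forces $\tau=\tau'$, i.e.\ $\rho=\rho'$.

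For the second assertion, the first part of the corollary shows that assigning to an equivalence class $[\partial_{\rho,e,\varphi}]$ the ray $\rho$ is a well-defined injection from equivalence classes of homogeneous LNDs of fiber type into the set of rays of $\sigma$. It remains to identify its image. Given a ray $\rho\subseteq\sigma$ with dual face $\tau$, an equivalence class projecting to $\rho$ exists if and only if there is some pair $(e,\varphi)$ with $e\in S_\rho$ and $\varphi\in\Phi_e^*$, i.e.\ $\dim\Phi_e>0$ for some $e\in S_\rho$. By Theorem \ref{conv-fib} this is equivalent to the bigness of $\DD(m)$ for every $m\in\relint(\tau)$, which yields the claimed bijection.

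I do not expect significant obstacles; the only non-formal step is verifying that $\tau$ is reconstructible from $\ker\partial$, which is handled by the semiampleness argument above. Everything else is a bookkeeping combination of Theorem \ref{fiber} and Theorem \ref{conv-fib}.
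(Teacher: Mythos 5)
Your proof is correct and follows the same route as the paper, which simply cites the kernel description of Theorem \ref{fiber} for the first assertion and Theorem \ref{conv-fib} for the second. The only content you add is the explicit verification that the kernel $\bigoplus_{m\in\tau_M}A_m\chi^m$ determines the face $\tau$ (and hence $\rho$) via semiampleness of $\DD(m)$ --- a point the paper leaves implicit --- and that step is sound.
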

\begin{proof}
The first assertion follows from the description of $\ker\partial_{\rho,e,\varphi}$ in Lemma \ref{fiber}. The second follows from the first one due to Theorem \ref{conv-fib}.
\end{proof}

\section{Homogeneous Makar Limanov invariant}

Let $X=\spec A$, where $A$ is a finitely generated normal domain, and let $\LND(A)$ be the set of all LNDs on $A$. The \emph{Makar-Limanov invariant} (ML invariant for short) of $A$ (or of $X=\spec A$) is defined as $$\ML(A)=\bigcap_{\partial\in\LND(A)}\ker\partial\,.$$
In the case where $A$ is effectively $M$-graded we let $\LND_{\mathrm{h}}(A)$ be the set of all homogeneous LNDs on $A$ and $\LND_{\mathrm{fib}}(A)$ be the set of all homogeneous LNDs of fiber type on $A$. Following \cite{Lie08}, we define 
$$\ML_{\mathrm{h}}(A)=\bigcap_{\partial\in\LND_{\mathrm{h}}(A)}\ker\partial\qquad \mbox{and} \qquad
\ML_{\mathrm{fib}}(A)=\bigcap_{\partial\in\LND_{\mathrm{fib}}(A)}\ker\partial\,.$$
Clearly $\ML(A)\subseteq\ML_{\mathrm{h}}(A)\subseteq\ML_{\mathrm{fib}}(A)$.

In this section we provide examples showing that, in general, these inclusions are strict and so, the homogeneous LNDs are not enough to compute the ML invariant.

\begin{example}
Let $A=\KK[x,y]$ with the grading given by $\deg x=0$ and $\deg y=1$. In this case, both partial derivatives $\partial_x=\partial/\partial x$ and $\partial_y=\partial/\partial y$ are homogeneous. Since $\ker\partial_x=\KK[y]$ and $\ker\partial_y=\KK[x]$ we have $\ML_{\mathrm{h}}=\KK$. Furthermore, it is easy to see that there is only one equivalence class of LNDs of fiber type. A representative of this class is $\partial_y$ (see Corollary \ref{feq}). This yields $\ML_{\mathrm{fib}}(A)=\KK[x]$. Thus $\ML_{\mathrm{h}}(A)\subsetneq\ML_{\mathrm{fib}}(A)$ in this case.
\end{example}

\begin{example}
To provide an example where $\ML(A)\subsetneq\ML_{\mathrm{h}}(A)$ we consider the Koras-Russell threefold $X=\spec A$, where
$$A=\KK[x,y,z,t]/(x+x^2y+z^2+t^3)\,.$$
The ML invariant was first introduced in \cite{KaMa97} to prove that $X\not\simeq\AF^3$. In fact $\ML(A)=\KK[x]$ while $\ML(\AF^3)=\KK$. In the recent paper \cite{Dub08} Dubouloz shows that the cylinder over the Koras-Russell threefold has trivial ML invariant i.e., $\ML(A[w])=\KK$, where $w$ is a new variable.

Let $A[w]$ be graded by $\deg A=0$ and $\deg w=1$, and let $\partial$ be a homogeneous LND on $A[w]$. If $e:=\deg\partial \leq -1$ then $\partial(A)=0$ and by Lemma \ref{LND} $(i)$ we have that $\ker\partial=A$ and $\partial$ is equivalent to the partial derivative $\partial/\partial w$.

If $e\geq 0$ then $\partial(w)=aw^{e+1}$, where $a\in A$ and so, by Lemma \ref{LND} $(vi)$ $w\in\ker\partial$. Furthermore, for any $a\in A$ we have $\partial(a)=bw^e$, for a unique $b\in A$. We define a derivation $\bar\partial:A\rightarrow A$ by $\bar\partial(a)=b$. Since $\partial^r(a)=\bar\partial^r(a)w^{re}$ the derivation $\bar\partial$ is LND. This yields $\ML_h(A[w])=\ML(A)=\KK[x]$ while $\ML(A[w])=\KK$.
\end{example}

\section{Birational equivalence classes of varieties with \\trivial ML invariant}

In this section we establish the following birational characterization of normal affine varieties with trivial ML invariant. Let $\KK$ be an algebraically closed field of characteristic 0.

\begin{theorem} \label{birML}
Let $X=\spec A$ be an affine variety of dimension $n\geq 2$ over $\KK$. If $\ML(X)=\KK$ then $X \simeq_{\mathrm{bir}} Y\times\PP^2$ for some variety $Y$. Conversely, in any birational class $Y\times\PP^2$ there is an affine variety $X$ with $\ML(X)=\KK$.
\end{theorem}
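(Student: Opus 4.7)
The theorem splits into two essentially independent implications. For the direct one, I would extract from $\ML(A)=\KK$ two LNDs with distinct kernels and reduce the birational structure of $X$ to a field-theoretic consequence of Lemma~\ref{LND}(vii). For the converse, I would construct $X$ directly via the Altmann--Hausen combinatorial framework on a projective model of $Y$, choosing the polyhedral divisor so that Theorems~\ref{fiber} and~\ref{conv-fib} furnish enough fiber-type LNDs to kill every non-constant element.

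\textbf{Forward implication.} Since $\ker\partial$ has codimension one in $A$ by Lemma~\ref{LND}(i), the hypothesis $\ML(A)=\KK$ forces the existence of two LNDs $\partial_1,\partial_2$ on $A$ with $\ker\partial_1\neq\ker\partial_2$. Write $K=\fract A$, $K_i=\fract(\ker\partial_i)$ and $L=K_1\cap K_2$. By Lemma~\ref{LND}(i) each $K_i$, hence also $L$, is algebraically closed in $K$, and by Lemma~\ref{LND}(vii) each extension $K/K_i$ is purely transcendental of degree one. A transcendence-degree count then yields $\trdeg_\KK L=n-2$: otherwise $\trdeg L=n-1$ would force both $K_i$ to be algebraic over $L$ and so equal to $L$ by algebraic closedness, contradicting $K_1\neq K_2$. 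Next, I would argue that $K$ is purely transcendental of degree two over $L$. Pick $u\in K_1\setminus L$, which is transcendental over $L$; using the local nilpotency of $\partial_2$ (via an auxiliary LND constructed on $\ker\partial_1$ by averaging along the $\partial_1$-flow, or via a suitable slice argument) one refines Lemma~\ref{LND}(vii) to conclude $K_1=L(u)$. Then Lemma~\ref{LND}(vii) applied to $\partial_1$ gives $K=K_1(v)=L(u,v)$. Taking any variety $Y$ with $\KK(Y)=L$ yields $X\simeq_{\mathrm{bir}} Y\times\PP^2$.

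\textbf{Converse implication.} Given a variety $Y$ of dimension $n-2$, pick a smooth projective birational model $Y_0$ of $Y$ and an ample Cartier divisor $D_0$ on $Y_0$. Set $N=\ZZ^2$, $\sigma=\QQ_{\geq 0}^2\subseteq N_\QQ$, fix a vector $v$ in the relative interior of $\sigma$, put $\Delta=v+\sigma\in\pol_\sigma(N_\QQ)$, and define $\DD=\Delta\cdot D_0$. Then $h_\Delta(m)=\langle m,v\rangle$, so $\DD(m)=\langle m,v\rangle\,D_0$ is ample for every $m\in\sigma^\vee_M\setminus\{0\}$, making $\DD$ proper. By Theorem~\ref{AH}, $X=\spec A[Y_0,\DD]$ is normal affine of dimension $2+\dim Y_0=n$, and birationally $X\simeq_{\mathrm{bir}} Y_0\times\TT\simeq_{\mathrm{bir}} Y\times\PP^2$. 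For each of the two rays $\rho_i$ of $\sigma$, the divisor $\DD(m)$ is big on $\relint\tau_i$, so by Theorems~\ref{conv-fib} and~\ref{fiber} there is a fiber-type LND $\partial_i$ on $A$ with kernel $\bigoplus_{m\in\tau_{i,M}}A_m\chi^m$. Since $\sigma^\vee$ is the first quadrant and $\tau_1,\tau_2$ are its two rays, $\tau_1\cap\tau_2=\{0\}$, so the intersection of the two kernels equals $A_0=H^0(Y_0,\OO_{Y_0})=\KK$ by projectivity of $Y_0$. Hence $\ML(X)\subseteq\ML_{\mathrm{fib}}(X)=\KK$.

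\textbf{Main obstacle.} The delicate step is the pure transcendence $K=L(u,v)$ in the forward direction: a priori one only knows that $K/L$ has transcendence degree two, not that it is rational. One must carefully exploit the locally nilpotent (not just derivational) structure to force Lemma~\ref{LND}(vii) to apply to the intermediate tower $L\subseteq K_1\subseteq K$. A subtle point is that the extension $\bar\partial_2$ of $\partial_2$ to $K$ need not preserve $K_1$, so one cannot naively restrict $\partial_2$ to $\ker\partial_1$; one must replace $\partial_2$ by an LND whose flow is compatible with that of $\partial_1$, or else analyse the Lie closure of $\langle\bar\partial_1,\bar\partial_2\rangle$ inside $\operatorname{Der}_\KK(K)$ to produce the required slice of $K_1$ over $L$.
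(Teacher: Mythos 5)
Your converse direction is essentially the paper's own construction (Lemma~\ref{ML0-ex} specialized to $\rank M=2$): a proper $\sigma$-polyhedral divisor $\Delta\cdot D_0$ with $\Delta=v+\sigma$ over a projective model of $Y$, and the two fiber-type LNDs attached to the two rays of $\sigma$. That part is fine (just make sure $v$ is a \emph{lattice} point of $\relint(\sigma)$ so that $S_{\rho_i}$ and $\Phi_{e_i}$ behave as claimed).

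The forward direction, however, has a genuine gap exactly where you flag it, and your proposed repairs do not close it. After establishing $\trdeg_L K=2$ (note also that your count only excludes $\trdeg_\KK L=n-1$; the lower bound $\trdeg_\KK L\geq n-2$ needs the compositum inequality $\trdeg(K_1K_2)-\trdeg L\leq\trdeg_L K_1+\trdeg_L K_2$), you assert $K_1=L(u)$ for any $u\in K_1\setminus L$ by ``averaging along the $\partial_1$-flow'' or by a ``Lie closure'' argument. Neither is a proof: as you yourself observe, $\bar\partial_2$ need not preserve $K_1$, there is no reason an LND with flow ``compatible'' with that of $\partial_1$ exists, and the Lie algebra generated by two LNDs inside $\operatorname{Der}_\KK(K)$ need not contain any locally nilpotent element with the required kernel. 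Rationality of $K$ over $L$ is precisely the hard content of the theorem and cannot be extracted from Lemma~\ref{LND}(vii) alone applied to a tower.

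The paper closes this gap by changing base: it forms $\bar A=A\otimes_\KK L$, a \emph{two-dimensional} affine $L$-domain with $\fract\bar A=K$, extends $\partial_1,\partial_2$ to $L$-LNDs $\bar\partial_i$ (possible because $L\subseteq\ker\partial_i$), and checks $\ker\bar\partial_1\cap\ker\bar\partial_2=L$, so that $\ML(\bar A)=L$ is trivial. It then invokes Makar-Limanov's theorem that a two-dimensional affine domain over a field of characteristic zero with trivial ML invariant embeds into a polynomial ring in two variables, whence $K\simeq L(x_1,x_2)$. This is the key external input your argument is missing, and it is the reason the paper takes care to state Lemma~\ref{LND} over fields that are not necessarily algebraically closed (the base field here being $L$). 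To complete your proof you should either cite that theorem or supply an independent proof of the two-dimensional case.
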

\begin{proof}
Let $K=\fract A$ be the field of rational functions on $X$ so that $\trdeg_{\KK}(K)=n$. As usual $\trdeg_{\KK}(K)$ denotes the transcendence degree of the field extension $\KK\subseteq K$.
 
Since $\ML(X)=\KK$, there exists at least 2 non-equivalent LNDs $\partial_1,\partial_2:A\rightarrow A$. We let $L_i=\fract(\ker\partial_i)\subseteq K$, for $i=1,2$. By Lemma \ref{LND} $(vii)$, $L_i\subseteq K$ is a purely transcendental extension of degree 1, for $i=1,2$.

We let $L=L_1\cap L_2$. By an inclusion-exclusion argument we have $\trdeg_L(K)=2$. We let $\bar{A}$ be the 2-dimensional algebra over $L$
$$\bar{A}= A\otimes_{\KK} L\,.$$
Since $\fract\bar{A}=\fract A=K$ and $L\subseteq\ker\partial_i$ for $i=1,2$, the LND $\partial_i$ extends to a locally nilpotent $L$-derivation $\bar\partial_i$ by setting
$$\bar{\partial_i}(a\otimes l)=\partial_i(a)\otimes l, \quad \mbox{where}\quad a\in A, \mbox{ and } l\in L\,.$$

Furthermore, $\ker\bar{\partial}_i=\bar{A}\cap L_i$, for $i=1,2$ and so
$$\ker\bar{\partial}_1\cap \ker\bar{\partial}_2=\bar{A}\cap L_1\cap L_2=L\,.$$
Thus the Makar-Limanov invariant of the 2-dimensional $L$-algebra $\bar{A}$ is trivial.

By the theorem in \cite[p. 41]{M-L}, $\bar{A}$ is isomorphic to an $L$-subalgebra of $L[x_1,x_2]$, where $x_1,x_2$ are new variables. Thus
$$K\simeq L(x_1,x_2), \quad\mbox{and so}\quad X\simeq_{\mathrm{bir}}Y\times\PP^2\,,$$
where $Y$ is any variety with $L$ as the field of rational functions. 

The second assertion follows from Lemma \ref{ML0-ex} bellow. This completes the proof.
\end{proof}

The following lemma provides examples of affine varieties with trivial ML invariant in any birational class $Y\times\PP^n$, $n\geq 2$. It is a generalization of Section 4.3 in \cite{Lie08}. Let us introduce some notation.

As before, we let $N$ be a lattice of rank $n\geq 2$ and $M$ be its dual lattice.  We let $\sigma\subseteq N_{\QQ}$ be a pointed polyhedral cone of full dimension. We fix $p\in\relint(\sigma)\cap M$. We let $\Delta=p+\sigma$ and $h=h_{\Delta}$ so that 
$$h(m)=\langle p,m\rangle>0, \quad \mbox{for all } m\in\sigma^\vee\setminus\{0\}\,.$$

Furthermore, letting $Y$ be a projective variety and $H$ be a semiample and big Cartier $\ZZ$-divisor on $Y$, we let $A=A[Y,\DD]$, where $\DD$ is the proper $\sigma$-polyhedral divisor $\DD=\Delta\cdot H$, so that 
$$\DD(m)=\langle p,m\rangle\cdot H,\quad  \mbox{for all } m\in\sigma^\vee\,.$$

Recall that $\fract A=K_Y(M)$ so that $\spec A\simeq_{\mathrm{bir}} Y\times\PP^n$.
\begin{lemma} \label{ML0-ex}
With the above notation, the affine variety $X=\spec A[Y,\DD]$ has trivial ML invariant.
\end{lemma}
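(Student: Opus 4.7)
The plan is to produce a homogeneous LND of fiber type for every ray $\rho$ of $\sigma$, and then show that the intersection of their kernels collapses to $\KK$.

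First I would check the criterion of Theorem \ref{conv-fib}. Fix a ray $\rho\subseteq\sigma$ and let $\tau\subseteq\sigma^\vee$ be the dual codimension $1$ face. For every $m\in\relint(\tau)$ we have $m\in\sigma^\vee\setminus\{0\}$, hence $\langle p,m\rangle>0$ since $p\in\relint(\sigma)$, and so $\DD(m)=\langle p,m\rangle\cdot H$ is a positive rational multiple of $H$. As $H$ is big and bigness is preserved by positive scaling, $\DD(m)$ is big for every $m\in\relint(\tau)$. By Theorem \ref{conv-fib} there exist $e\in S_\rho$ and $\varphi\in\Phi_e^*$, so by Theorem \ref{fiber} we obtain a nontrivial homogeneous LND $\partial_\rho:=\partial_{\rho,e,\varphi}$ of fiber type on $A$ whose kernel is
$$\ker\partial_\rho=\bigoplus_{m\in\tau_M}A_m\chi^m\,.$$

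Next I would intersect these kernels over all rays of $\sigma$. Since each kernel is $M$-graded,
$$\ML_{\mathrm{fib}}(A)\;\subseteq\;\bigcap_{\rho\subseteq\sigma}\ker\partial_\rho\;=\;\bigoplus_{m\in C}A_m\chi^m,\qquad C:=\bigcap_{\rho\subseteq\sigma}\tau_M^{\rho},$$
where $\tau^{\rho}=\rho_0^\bot\cap\sigma^\vee$ as $\rho$ runs over the rays of $\sigma$. A lattice vector $m$ lies in $C$ iff $\langle m,\rho_0\rangle=0$ for every ray $\rho$ of $\sigma$, iff $m$ vanishes on a spanning set of $N_\QQ$ (here I use the assumption that $\sigma$ has full dimension $n$). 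Hence $C=\{0\}$.

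It remains to identify $A_0\chi^0$. Since $\DD(0)=0$ and $Y$ is projective (and normal, hence with $H^0(Y,\OO_Y)=\KK$),
$$A_0=H^0(Y,\OO_Y(\DD(0)))=H^0(Y,\OO_Y)=\KK\,.$$
Combining the three steps,
$$\KK\;\subseteq\;\ML(A)\;\subseteq\;\ML_{\mathrm{fib}}(A)\;\subseteq\;A_0\chi^0=\KK\,,$$
so $\ML(A)=\KK$. No step looks genuinely hard: the only delicate point is verifying the bigness hypothesis in Theorem \ref{conv-fib}, which is immediate from $p\in\relint(\sigma)$ and the bigness of $H$; the collapse $C=\{0\}$ uses the full-dimensionality of $\sigma$ in an essential way, and would fail otherwise.
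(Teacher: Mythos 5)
Your proof is correct and follows essentially the same route as the paper: use Theorem \ref{conv-fib} (bigness of $\DD(m)=\langle p,m\rangle\cdot H$ on each facet $\tau$) to produce a fiber-type LND for every ray via Theorem \ref{fiber}, then observe that the intersection of the dual facets is $\{0\}$ because $\sigma$ is pointed and full-dimensional, so the intersection of the kernels lands in $A_0=H^0(Y,\OO_Y)=\KK$. Your spelled-out verification of the bigness hypothesis and of $C=\{0\}$ just makes explicit what the paper leaves implicit.
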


\begin{proof}
Let $\{\rho_i\}_i$ be the set of all rays of $\sigma$ and $\{\tau_i\}_i$ the set of the corresponding dual codimension 1 faces of $\sigma^\vee$. Since $rH$ is big for all $r>0$, Theorem \ref{conv-fib} shows that there exists $e_i\in S_{\rho_i}$ such that $\dim\Phi_{e_i}$ is positive, and so we can chose a non-zero $\varphi_i\in\Phi_{e_i}$. In this case, Theorem \ref{fiber} shows that there exists a non-trivial locally nilpotent derivation $\partial_{\rho_i,e_i,\varphi_i}$, with
$$\ker\partial_{\rho_i,e_i,\varphi_i}=\bigoplus_{m\in\tau_i\cap M}A_m\chi^m\,.$$

Since the cone $\sigma$ is pointed and has full dimension, the same holds for $\sigma^\vee$. Thus, the intersection of all codimension 1 faces reduces to one point $\bigcap_i\tau_i=\{0\}$ and so
$$\bigcap_{i}\ker\partial_{\rho_i,e_i,\varphi_i}\subseteq A_0=H^0(Y,\OO_Y)=\KK\,.$$
This yields
$$\ML(A)=\ML_{\mathrm{h}}(A)=\ML_{\mathrm{fib}}(A)=\KK\,.$$
\end{proof}

With the notation as in the proof of Lemma \ref{ML0-ex}, we can provide yet another explicit construction. We fix isomorphisms $M\simeq \ZZ^n$ and $N\simeq \ZZ^n$ such that the standard bases $\{\mu_1,\cdots,\mu_n\}$ and $\{\nu_1,\cdots,\nu_n\}$ for $M_{\QQ}$ and $N_{\QQ}$, respectively, are mutually dual. We let $\sigma$ be the first quadrant in $N_{\QQ}$, and $p=\sum_{i}\nu_i$, so that
$$h(m)=\sum_{i}m_i, \mbox{ and } \DD(m)=\sum_{i}m_i\cdot H, \quad \mbox{where}\quad  m=(m_1,\cdots,m_n), \mbox{ and } m_i\in\QQ_{\geq 0}\,.$$ 

We let $\rho_i\subseteq\sigma$ be the ray spanned by the vector $\nu_i$, and let $\tau_i$ be its dual codimension 1 face. In this setting, $S_{\rho_i}=(\tau_i-\mu_i)\cap M$. Furthermore, letting $e_{i,j}=-\mu_i+\mu_j$ (where $j\neq i$) yields
$$h(m)=h(m+e_{i,j}),\quad \mbox{so that} \quad D_{e_{i,j}}=0, \quad \mbox{and} \quad \Phi_{e_{i,j}}=H^0(Y,\OO_Y)=\KK\,.$$

Choosing $\varphi_{i,j}=1\in\Phi_{e_{i,j}}$ we obtain\footnote{Recall that $\partial_{\nu_i}$ is the partial derivatives defined in Section \ref{LND-toric}.}
$$\partial_{i,j}:=\partial_{\rho_i,e_{i,j},\varphi_{i,j}}=\chi^{\mu_j}\partial_{\nu_i},\quad \mbox{where} \quad i,j\in\{1,\cdots,n\},\ i\neq j\,$$
is a homogeneous LND on $A=A[Y,\DD]$ with degree $e_{i,j}$ and kernel
$$\ker\partial_{i,j}=\bigoplus_{\tau_i\cap M}A_m\chi^m\,.$$
As in the proof of Lemma \ref{ML0-ex} the intersection
$$\bigcap_{i,j}\ker\partial_{i,j}=\KK, \quad \mbox{and so} \quad \ML(X)=\KK\,.$$

We can give a geometrical description of $X$. Consider the $\OO_Y$-algebra
$$\tA=\bigoplus_{m\in\sigma^\vee_M}\OO_Y(\DD(m))\chi^m, \quad \mbox{so that} \quad A=H^0(Y,\tA)\,.$$
In this case, we have
$$\tA=\bigoplus_{r=0}^\infty\,\bigoplus_{\sum m_i=r,\ m_i\geq0}\OO_Y(rH)\chi^m\simeq\Sym\left(\bigoplus_{i=1}^{n}\OO_Y(H)\right)\,.$$
And so $\tX=\SPEC_Y\tA$ is the vector bundle associated to the locally free sheaf $\bigoplus_{i=1}^{n}\OO_Y(H)$ (see Ch. II Ex. 5.18 in \cite{Har77}). We let $\pi:\tX\rightarrow Y$ be the corresponding affine morphism.

The morphism $\varphi:\tX\rightarrow X$ induced by taking global sections corresponds to the contraction of the zero section to a point $\bar{0}$. We let $\theta:=\pi\circ\varphi^{-1}:X\setminus\{\bar{0}\}\rightarrow Y$. The point $\bar{0}$ corresponds to the augmentation ideal $A\setminus \KK$. It is the only attractive fixed point of the $\TT$-action. The orbit closures of the $\TT$-action on $X$ are $\Theta_y:=\overline{\theta^{-1}(y)}=\theta^{-1}(y)\cup \{0\}$, $\forall y\in Y$. Let $\chi^{\mu_i}=u_i$. $\Theta_y$ is equivariantly isomorphic to $\spec\KK[\sigma^\vee_M]=\spec \KK[u_1,\cdots,u_n]\simeq \AF^n$. 

The $\GA$-action $\phi_{i,j}:\GA\times X\rightarrow X$ induced by the homogeneous LND $\partial_{i,j}$ restricts to a $\GA$-action on $\Theta_y$ given by
$$\phi_{i,j}|_{\Theta_Y}:\GA\times\AF^n\rightarrow\AF^n,\quad \mbox{where}\quad u_i\mapsto u_i+tu_j,\quad u_r\mapsto u_r,\ \forall r\neq i\,.$$

Moreover, the unique fixed point $\bar{0}$ is singular unless $Y$ is a projective space and there is no other singular point. By Theorem 2.9 in \cite{Lie09} $X$ has rational singularities if and only if $\OO_Y$ and $\OO_Y(H)$ are acyclic. The latter assumption can be fulfilled by taking, for instance, $Y$ toric or $Y$ a rational surface, and $H$ a large enough multiple of an ample divisor.

\section{FML invariant}

The main application of the ML invariant is to distinguish some varieties from the affine space. Nevertheless, this invariant is far from being optimal as we have seen in the previous section. Indeed, there is a large class of non-rational normal affine varieties with trivial ML invariant. To eliminate such a pathology, we propose below a generalization of the classical ML invariant.

Let $A$ be a finitely generated normal domain. We define the FML invariant of $A$ as the subfield of $K=\fract A$ given by
$$\FML(A)=\bigcap_{\partial\in\LND(A)}\fract(\ker\partial)\,.$$
In the case where $A$ is $M$-graded we define $\FML_{\mathrm{h}}$ and $\FML_{\mathrm{fib}}$ in the analogous way.

\begin{remark}
Let $A=\KK[x_1,\cdots,x_n]$ so that $K=\KK(x_1,\cdots,x_n)$. For the partial derivative $\partial_i=\partial/\partial x_i$ we have $\fract(\ker\partial_i)=\KK(x_1,\cdots,\widehat{x_i},\cdots,x_n)$, where $\widehat{x_i}$ means that $x_i$ is omitted. This yields
$$\FML(A)\subseteq\bigcap_{i=1}^n \fract(\ker\partial_i)=\KK\,,$$
and so $\FML(A)=\KK$. Thus, the FML invariant of the affine space is trivial.
\end{remark}

For any finitely generated normal domain $A$ there is an inclusion $\ML(A)\subseteq\FML(A)$. A priori, since $\FML(\AF^n)=\KK$ the FML invariant is stronger than the classical one in the sense that it can distinguish more varieties form the affine space that the classical one. In the next proposition we show that the classical ML invariant can be recovered from the FML invariant.
\begin{proposition}
Let $A$ be a finitely generated normal domain, then $$\ML(A)=\FML(A)\cap A\,.$$
\end{proposition}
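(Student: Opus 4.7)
The plan is to prove the two inclusions separately, with the key tool being the factorial closedness of $\ker\partial$ (Lemma \ref{LND}(ii)).

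For the inclusion $\ML(A)\subseteq\FML(A)\cap A$, the argument is immediate from the definitions. Any element $a\in\ML(A)$ lies in $\ker\partial\subseteq A$ for every $\partial\in\LND(A)$, hence in particular in $\fract(\ker\partial)$ for every such $\partial$, so $a\in\FML(A)\cap A$.

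For the reverse inclusion $\FML(A)\cap A\subseteq\ML(A)$, fix $a\in\FML(A)\cap A$ and an arbitrary LND $\partial$ on $A$. I need to show that $a\in\ker\partial$. Since $a\in\FML(A)\subseteq\fract(\ker\partial)$, I can write $a=b/c$ with $b,c\in\ker\partial$ and $c\neq 0$. Then $ac=b\in\ker\partial$, where both factors $a$ and $c$ lie in $A$ (we have $a\in A$ by assumption, and $c\in\ker\partial\subseteq A$). By the factorial closedness of $\ker\partial$ (Lemma \ref{LND}(ii)) applied to the product $ac\in\ker\partial$, we conclude that $a\in\ker\partial$. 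Since $\partial$ was arbitrary, $a\in\ML(A)$.

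There is no real obstacle here: the statement is essentially a formal consequence of factorial closedness, which ensures that the only way for an element of $A$ to be a fraction of elements of $\ker\partial$ is to lie in $\ker\partial$ itself. The two-line proof therefore reduces to invoking Lemma \ref{LND}(ii) after writing $a=b/c$.
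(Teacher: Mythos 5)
Your proof is correct and follows essentially the same route as the paper: both reduce the statement to showing $\ker\partial=\fract(\ker\partial)\cap A$ for each LND $\partial$, writing $a=b/c$ with $b,c\in\ker\partial$ and invoking the factorial closedness of $\ker\partial$ (Lemma \ref{LND}(ii)) on the product $ac=b$. No discrepancies to report.
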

\begin{proof}
We must show that for any LND $\partial$ on $A$,
$$\ker\partial=\fract(\ker\partial)\cap A\,.$$

The inclusion ``$\subseteq$'' is trivial. To prove the converse inclusion, we fix $a\in \fract(\ker\partial)\cap A$. Letting $b,c\in\ker\partial$ be such that $ac=b$, Lemma \ref{LND} $(ii)$ shows that $a\in\ker\partial$.
\end{proof}

Let $A=A[Y,\DD]$ for some proper $\sigma$-polyhedral divisor $\DD$ on a normal semiprojective variety $Y$. In this case $K=\fract A=K_Y(M)$, where $K_Y(M)$ corresponds to the field of fractions of the semigroup algebra $K_Y[M]$. It is a purely transcendental extension of $K_Y$ of degree $\rank M$.

Let $\partial$ be a homogeneous LND of fiber type on $A$. By definition, $K_Y\subseteq \fract(\ker\partial)$ and so, $K_Y\subseteq\FML_{\mathrm{fib}}(A)$. This shows that the pathological examples as in Lemma \ref{ML0-ex} cannot occur. Let us formulate the following conjecture.

\begin{conjecture} \label{conje}
Let $X$ be an affine variety. If $\FML(X)=\KK$ then $X$ is rational.
\end{conjecture}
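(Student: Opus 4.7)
The plan is to proceed by induction on $n = \dim X$, with the inductive step powered by the birational characterization of Theorem \ref{birML}. The base cases are classical: in dimension $1$, any affine normal domain admitting a nonzero LND is a polynomial ring, so $X \cong \AF^1$; in dimension $2$, Miyanishi's theorem already gives that $\ML(X) = \KK$ forces $X \cong \AF^2$. In both cases $X$ is rational, so we may assume $n \geq 3$ and that the conjecture holds for all affine varieties of dimension strictly less than $n$.

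For the inductive step, observe that $\ML(X) \subseteq \FML(X) = \KK$, so in particular $\ML(X) = \KK$. By Theorem \ref{birML} we obtain $X \simeq_{\mathrm{bir}} Y \times \PP^2$ for a normal variety $Y$ of dimension $n-2$ whose function field is $L := L_1 \cap L_2$, where $L_i = \fract(\ker \partial_i)$ for a pair of non-equivalent LNDs $\partial_1, \partial_2$ produced in the proof of that theorem. Since $K = \fract A \cong L(x_1, x_2)$ is purely transcendental of degree $2$ over $L$, proving rationality of $X$ reduces to showing that $L$ is purely transcendental over $\KK$.

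The next step is to exhibit a finitely generated normal $\KK$-subalgebra $B \subseteq A$ of dimension $n-2$ with $\fract B = L$ and to verify that $\FML(B) = \KK$; the inductive hypothesis would then yield $\spec B$ rational and therefore $X$ rational. A natural candidate for $B$ is an appropriate affine model built from $\ker\partial_1 \cap \ker\partial_2$, refined by inverting a finite set of denominators so as to carry restrictions of additional LNDs on $A$. To produce enough LNDs on $B$, one would look for LNDs $\partial$ of $A$ that stabilize $B$ (for instance by preserving $\ker\partial_1 \cap \ker\partial_2$) and such that $\fract(\ker \partial) \cap L$ is strictly smaller than $L$; each such $\partial$ restricts to a nonzero LND on $B$, and by the hypothesis $\FML(A) = \KK$ one expects enough such $\partial$ to witness $\FML(B) = \KK$.

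The hard part, and the reason this statement is only stated as a conjecture in the paper, is precisely the control of this descent. An arbitrary LND $\partial$ on $A$ whose fraction field of kernel meets $L$ nontrivially need not stabilize the chosen model $B$, and conversely the LNDs that do stabilize $B$ may share invariants beyond $\KK$. In the presence of a compatible torus action one can exploit the classification of fiber-type LNDs in Theorem \ref{fiber} together with Corollary \ref{feq} to force the descent combinatorially --- this is how the paper handles $\dim X \leq 3$, where $\dim Y \leq 1$ and rationality of $L$ reduces to a direct classification of curves. For general $n$, however, bridging the gap between abstract triviality of $\FML(A)$ and triviality of $\FML$ for \emph{some} affine model of an invariant subfield appears to demand genuinely new input, such as a version of Theorem \ref{birML} iterated with control on successive affine models, or a non-birational refinement of the FML invariant that behaves functorially under $\GA$-quotients.
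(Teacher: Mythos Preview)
The statement you are attempting is labeled \emph{Conjecture} in the paper; there is no proof given, only the special case $\dim X\le 3$ in Theorem~\ref{FML3}. So there is nothing to compare your argument against except that special case, and your write-up is honest about this: you sketch an inductive descent and then explain exactly where it breaks, namely that an LND on $A$ witnessing that some $g\in L\setminus\KK$ lies outside $\fract(\ker\partial)$ need not restrict to any chosen affine model $B$ of $L$. That diagnosis is correct, and it is precisely the obstruction that keeps the statement conjectural.

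Two points of detail are worth correcting. First, your description of how the paper settles $\dim X=3$ is off: the argument does \emph{not} go through the fiber-type classification of Theorem~\ref{fiber} or Corollary~\ref{feq}. It is the purely field-theoretic Lemma~\ref{FML-cp1}: if $X\simeq_{\mathrm{bir}} C\times\PP^n$ with $C$ a curve of positive genus, then the function field $L=K(C)$ is automatically contained in $\fract(\ker\partial)$ for \emph{every} LND $\partial$, because any two elements of $L$ are algebraically dependent and Lemma~\ref{LND}(vii) rules out $L\cap\fract(\ker\partial)=\KK$. No induction, no descent to a model of $C$, no torus action is used. This is the mechanism your inductive scheme would need to replace in higher dimension, and there is no obvious analogue once $\dim Y\geq 2$. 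Second, in your base case for surfaces, Miyanishi's theorem gives $X\cong\AF^2$ only under a smoothness hypothesis; what the paper actually uses (and what suffices) is that $\ML(X)=\KK$ forces $\fract A$ to embed in $\KK(x_1,x_2)$, hence $X$ is rational.
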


The following lemma proves Conjecture \ref{conje} in the particular case where $X\simeq_{\mathrm{bir}}C\times\PP^n$, with $C$ a curve.

\begin{lemma} \label{FML-cp1}
Let $X=\spec A$ be an affine variety such that $X\simeq_{\mathrm{bir}}C\times\PP^n$, where $C$ is a curve with field rational functions $L$. If $C$ has positive genus then $\FML(X)\supseteq L$. In particular, if $\FML(X)=\KK$ then $C$ is rational.
\end{lemma}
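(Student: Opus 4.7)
The strategy is geometric: interpret each $\partial\in\LND(A)$ as the infinitesimal generator of a $\GA$-action on $X$; each $\GA$-orbit is a rational curve, and a rational curve admits no non-constant morphism to a curve of positive genus. This forces the rational projection onto $C$ to be invariant under every such action, yielding $L\subseteq\fract(\ker\partial)$ for every LND $\partial$, and hence $L\subseteq\FML(X)$.

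More precisely, from $X\simeq_{\mathrm{bir}}C\times\PP^n$ I obtain a dominant rational map $\pi:X\dashrightarrow C$; replacing $C$ by its smooth projective model $\bar C$ (which leaves the function field $L$ and the genus unchanged), I may view $\bar C$ as a smooth projective curve of genus $\geq 1$ and identify $L$ with $\pi^*(L)\subseteq K:=\fract A$. Fix now a nontrivial $\partial\in\LND(A)$. A general orbit $O$ of the induced $\GA$-action is isomorphic to $\AF^1$; the restriction $\pi|_O\colon\AF^1\dashrightarrow\bar C$ extends to a morphism $\AF^1\to\bar C$, since the source is smooth and the target is smooth projective, and then to a morphism $\PP^1\to\bar C$. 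The classical fact that every morphism from $\PP^1$ to a curve of positive genus is constant then shows that $\pi$ is constant along general orbits, so $\pi^*(L)$ is contained in the field $K^{\GA}$ of $\GA$-invariant rational functions on $X$.

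It remains to identify $K^{\GA}$ with $\fract(\ker\partial)$. Extending $\partial$ to a $\KK$-derivation $D$ on $K$, one has $K^{\GA}=\ker D$; and $\ker D=\fract(\ker\partial)$: the inclusion $\supseteq$ is immediate from the Leibniz rule, while for $\subseteq$, if $\ker D$ were strictly larger than $\fract(\ker\partial)$ then by Lemma \ref{LND}(vii) $K$ would be algebraic over $\ker D$, and unique extension of derivations along algebraic extensions in characteristic zero would force $D=0$, contrary to $\partial\neq 0$. Intersecting over all $\partial$ gives $L\subseteq\FML(X)$. The ``in particular'' statement follows at once: if $\FML(X)=\KK$, then $L=\KK$, which contradicts $\trdeg_{\KK}L=1$, so $C$ cannot have positive genus and must be rational. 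The only step requiring care is the invariance of $\pi$ along general orbits, which ultimately rests on the classical rigidity of curves of positive genus against parameterization by $\PP^1$.
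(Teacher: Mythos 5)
Your proof is correct, but it takes a genuinely geometric route where the paper argues purely field-theoretically. The paper fixes $f,g\in L\setminus\KK$, uses $\trdeg_\KK(L)=1$ to produce an algebraic relation $P(f,g)=0$, differentiates it to conclude that $\partial(f)=0$ if and only if $\partial(g)=0$, and thereby obtains the dichotomy $L\subseteq\fract(\ker\partial)$ or $L\cap\fract(\ker\partial)=\KK$; the second case is then ruled out because it would make $\fract(\ker\partial)\subseteq K$ a non-purely-transcendental extension, contradicting Lemma \ref{LND}(vii). You instead integrate $\partial$ to a $\GA$-action, note that general orbits are copies of $\AF^1$, and use the rigidity of positive-genus curves (every morphism $\PP^1\to\bar C$ is constant) to force the projection to $C$ to be constant along orbits, hence $L\subseteq K^{\GA}=\fract(\ker\partial)$. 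Both arguments ultimately rest on the same L\"uroth-type fact, but yours makes the geometry explicit and avoids the dichotomy, at the price of invoking the orbit structure and the identification $K^{\GA}=\fract(\ker\partial)$. One step to tighten there: from $\ker D\supsetneq\fract(\ker\partial)$ you infer directly that $K$ is algebraic over $\ker D$, which tacitly uses that $\fract(\ker\partial)$ is algebraically closed in $K$ (otherwise $\ker D$ could a priori be a proper algebraic extension of $\fract(\ker\partial)$); this does follow from Lemma \ref{LND}(vii), since a field is algebraically closed in any purely transcendental extension of itself, so the argument is complete once this is said.
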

\begin{proof}
Assume that $C$ has positive genus. We have $K=\fract A=L(x_1,\ldots,x_n)$, where $x_1,\ldots,x_n$ are new variables.

We claim that $L\subseteq\FML(A)$. Indeed, let $\partial$ be an LND on $A$ and let $f,g\in L\setminus \KK$. Since $\trdeg_\KK(L)=1$, there exists a polynomial $P\in\KK[x,y]\setminus\KK$ such that $P(f,g)=0$. Applying the derivation $\partial:K\rightarrow K$ to $P(f,g)$ we obtain
$$\frac{\partial P}{\partial x}(f,g)\cdot \partial(f)+\frac{\partial P}{\partial y}(f,g)\cdot \partial(g)=0\,.$$

Since $f$ and $g$ are not constant we may suppose that $\frac{\partial P}{\partial x}(f,g)\neq 0$ and $\frac{\partial P}{\partial y}(f,g)\neq 0$. Hence $\partial(f)=0$ if and only if $\partial(g)=0$. This shows that one of the two following possibilities occurs:
$$L\subseteq\fract(\ker\partial) \quad \mbox{or} \quad L\cap\fract(\ker\partial)=\KK\,.$$

Assume first that $L\cap\fract(\ker\partial)=\KK$. Then, by Lemma \ref{LND} $(i)$  $\fract(\ker\partial)=\KK(x_1,\ldots,x_n)$ and so the field extension $\fract(\ker\partial)\subseteq K$ is not purely transcendental. This contradits Lemma \ref{LND} $(vii)$. Thus $L\subseteq \fract(\ker\partial)$ proving the claim and the lemma.
\end{proof}

\begin{remark}
We can apply Lemma \ref{FML-cp1} to show that the FML invariant carries more information than usual ML invariant. Indeed, let, in the notation of  Lemma \ref{ML0-ex}, $Y$ be a smooth projective curve of positive genus. Lemma \ref{ML0-ex} shows that $\ML(A[Y,\DD])=\KK$. While by Lemma \ref{FML-cp1}, $\FML(A[Y,\DD])\supseteq K_Y$.
\end{remark}

In the following theorem we prove Conjecture \ref{conje} in dimension at most 3.

\begin{theorem} \label{FML3}
Let $X$ be an affine variety of dimension $\dim X\leq 3$. If $\FML(X)=\KK$ then $X$ is rational.
\end{theorem}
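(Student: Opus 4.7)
The plan is to combine Theorem~\ref{birML} with Lemma~\ref{FML-cp1}. Since $\ML(X)\subseteq\FML(X)=\KK$, the hypothesis forces $\ML(X)=\KK$ as well, and this lets us feed $X$ into the birational characterization already obtained.

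First, I would dispose of the low-dimensional cases. If $\dim X=1$, then any non-trivial LND on $A$ has kernel of codimension $1$ by Lemma~\ref{LND}~(i), hence $\ker\partial=\KK$; thus $A$ is a one-dimensional normal $\KK$-domain admitting a non-trivial LND, which forces $X\simeq\AF^1$, in particular rational. (Note that $\FML(X)=\KK$ in particular implies that non-trivial LNDs exist.) If $\dim X=2$, we apply Theorem~\ref{birML} to get $X\simeq_{\mathrm{bir}}Y\times\PP^2$ for some variety $Y$; comparing dimensions, $\dim Y=0$, so $Y$ is a point and $X\simeq_{\mathrm{bir}}\PP^2$ is rational.

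The remaining case is $\dim X=3$. Again by Theorem~\ref{birML} there exists a variety $Y$ with $X\simeq_{\mathrm{bir}}Y\times\PP^2$, and now $\dim Y=1$, so $Y$ is birational to a smooth projective curve $C$. Lemma~\ref{FML-cp1} applies: writing $L=K(C)$, if $C$ had positive genus we would get the inclusion $L\subseteq\FML(X)$, which contradicts $\FML(X)=\KK$ because $L$ is a non-trivial extension of $\KK$. Therefore $C$ must be rational, so $X\simeq_{\mathrm{bir}}\PP^1\times\PP^2$ is rational, as desired.

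There is really no hard step here, since all the heavy lifting has already been done: Theorem~\ref{birML} produces the factor $\PP^2$ out of the triviality of the (weaker) ML invariant, and Lemma~\ref{FML-cp1} is precisely the obstruction statement needed to rule out a positive-genus base curve. The only place where one must be mildly careful is checking that $\FML(X)=\KK$ indeed forces the existence of enough LNDs to invoke Theorem~\ref{birML}, but this is immediate because $\ML(X)\subseteq\FML(X)$ and the theorem requires only $\ML(X)=\KK$.
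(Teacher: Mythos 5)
Your proof is correct and follows essentially the same route as the paper: deduce $\ML(X)=\KK$ from $\FML(X)=\KK$, apply Theorem~\ref{birML} to get $X\simeq_{\mathrm{bir}}C\times\PP^2$ in dimension $3$, and invoke Lemma~\ref{FML-cp1} to force $C$ rational. The only (harmless) difference is in dimension $\leq 2$, where the paper simply cites the known fact that a surface with trivial ML invariant is rational, while you rederive it from Lemma~\ref{LND} and Theorem~\ref{birML}.
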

\begin{proof}
Since $\FML(X)$ is trivial, the same holds for $\ML(X)$. If $\dim X\leq 2$ then $\ML(X)=\KK$ implies $X$ rational (see e.g., \cite[p. 41]{M-L}). Assume that $\dim X=3$. Lemma \ref{birML} implies that $X\simeq_{\mathrm{bir}}C\times\PP^2$ for some curve $C$. While by Lemma \ref{FML-cp1}, $C$ is a rational curve.
\end{proof}

\bibliographystyle{alpha}
\bibliography{lnd2}

\end{document}